\newcommand{\EM}{\ensuremath}
\newtheorem{thm}{Theorem}[section]%
\newtheorem{cor}[thm]{Corollary}%
\newtheorem{lem}[thm]{Lemma}%
\newtheorem{conj}[thm]{Conjecture}%
\newcommand{\dC}{\mathbb{C}}  
\newcommand{\dE}{\mathbb{E}}
 \newcommand{\dP}{\mathbb{P}} 
 \newcommand{\dR}{\mathbb{R}}
\newcommand{\rI}{\mathrm{I}}
\newcommand{\cA}{\mathcal{A}} 
\newcommand{\cC}{\mathcal{C}} \newcommand{\cD}{\mathcal{D}}
\newcommand{\cE}{\mathcal{E}} 
\newcommand{\cG}{\mathcal{G}} 
 \newcommand{\cL}{\mathcal{L}}
\newcommand{\cM}{\mathcal{M}} 
 \newcommand{\cP}{\mathcal{P}}
\newcommand{\cQ}{\mathcal{Q}} 
\newcommand{\cU}{\mathcal{U}} 
\newcommand{\cW}{\mathcal{W}}
\newcommand{\bA}{\mathbf{A}} 
 \newcommand{\bD}{\mathbf{D}}
\newcommand{\bE}{\mathbf{E}} 
\newcommand{\bI}{\mathbf{I}} 
\newcommand{\bM}{\mathbf{M}} 
 \newcommand{\bP}{\mathbf{P}}
\newcommand{\bQ}{\mathbf{Q}} 
\newcommand{\bS}{\mathbf{S}} \newcommand{\bT}{\mathbf{T}}
\newcommand{\bU}{\mathbf{U}} 
 \newcommand{\bX}{\mathbf{X}}
\newcommand{\ABS}[1]{{{\left| #1 \right|}}} % |1|
\newcommand{\BRA}[1]{{{\left\{#1\right\}}}} % {1}
\newcommand{\NRM}[1]{{{\left\| #1\right\|}}} % ||1||
\newcommand{\PAR}[1]{{{\left(#1\right)}}} % (1)
\newcommand{\OL}[1]{\overline{#1}}
\newcommand{\SSK}[1]{\substack{#1}}
\renewcommand{\leq}{\leqslant}
\renewcommand{\geq}{\geqslant}
\newcommand{\ds}[1]{\EM{\displaystyle{#1}}}
\title{The Dirichlet Markov Ensemble} %
\author{Djalil~\textsc{Chafa\"\i}} %
\date{\small Preprint -- September 2007. Revised -- August 2009, October 2009.}
\begin{document}

\maketitle

\begin{abstract}
  We equip the polytope of $n\times n$ Markov matrices with the normalized
  trace of the Lebesgue measure of $\mathbb{R}^{n^2}$. This probability space
  provides random Markov matrices, with i.i.d.\ rows following the Dirichlet
  distribution of mean $(1/n,\ldots,1/n)$. We show that if $\bM$ is such a
  random matrix, then the empirical distribution built from the singular
  values of$\sqrt{n}\,\bM$ tends as $n\to\infty$ to a Wigner quarter--circle
  distribution. Some computer simulations reveal striking asymptotic spectral
  properties of such random matrices, still waiting for a rigorous
  mathematical analysis. In particular, we believe that with probability one,
  the empirical distribution of the complex spectrum of $\sqrt{n}\,\bM$ tends
  as $n\to\infty$ to the uniform distribution on the unit disc of the complex
  plane, and that moreover, the spectral gap of $\bM$ is of order
  $1-1/\sqrt{n}$ when $n$ is large.
\end{abstract}

{\footnotesize \textbf{AMS 2000 Mathematical Subject Classification:} 15A52;
  15A51; 15A42; 60F15; 62H99.}

{\footnotesize \textbf{Keywords:} Random matrices; Markov matrices; Dirichlet
  laws; Spectral gap.}

\section{Introduction}
\label{se:intro}

Markov chains constitute an essential tool for the modelling of stochastic
phenomena in Biology, Computer Science, Engineering, and Physics. It is
nowadays well known that the trend to the equilibrium of ergodic Markov chains
is related to the spectral decomposition of their Markov transition matrix,
see for instance \cite{MR2209438,MR1490046,MR2375599}. The corresponding
literature is very rich, and many statisticians including for instance the
famous Persi Diaconis contributed to this subject, by providing quantitative
bounds for various concrete specific Markov chains. But how a Markov chain
behaves when its Markov transition matrix is taken arbitrarily in the set of
Markov matrices? The present paper aims to provide some partial answers to
this natural concrete question. From the statistical point of view, one can
think about considering random Markov matrices following the ``uniform law''
over the set of Markov matrices, which corresponds to a maximum entropy
distribution or Bayesian prior, see for example \cite{MR2278358}. Recall that
a $n\times n$ square real matrix $\bM$ is Markov if and only if its entries
are non--negative and each row sums up to $1$, i.e.\ if and only if each row
of $\bM$ belongs to the simplex
\begin{equation}\label{eq:simplex}
  \Lambda_n=\{(x_1,\ldots,x_n)\in[0,1]^n\text{ such that }x_1+\cdots+x_n=1\}
\end{equation}
which is the portion of the unit $\NRM{\cdot}_1$-sphere of $\dR^n$ with
non--negative coordinates. The spectrum of a Markov matrix lies in the unit
disc $\{z\in\dC;\ABS{z}\leq1\}$, contains $1$, and is symmetric with respect
to the real axis in the complex plane.

\subsubsection*{Uniform distribution on Markov matrices}

Let $\cM_n$ be the set of $n\times n$ Markov matrices. We need to give a
precise meaning to the notion of uniform distribution on $\cM_n$. This set is
a convex compact polytope with $n(n-1)$ degrees of freedom if $n>1$. It has
zero Lebesgue measure in $\dR^{n^2}$.

Since $\cM_n$ is a polytope of $\dR^{n^2}$ (i.e.\ intersection of half
spaces), the trace of the Lebesgue measure on it makes sense and coincides
with a cone measure\footnote{Actually, one can define the trace of the
  Lebesgue measure and then the uniform distribution on many compact subsets
  of the Euclidean space, by using the notion of Hausdorff measure
  \cite{MR2118797}. See also \cite{MR2481068} for an approximate simulation
  method based on billiards and random reflections.}, despite its zero
Lebesgue measure in $\dR^{n^2}$. Since $\cM_n$ is additionally compact, the
trace of the Lebesgue measure can be normalized into a probability
distribution. We thus define \emph{the uniform distribution} $\cU(\cM_n)$ on
$\cM_n$ as the normalized trace of the Lebesgue measure of $\dR^{n^2}$. The
following theorem relates $\cU(\cM_n)$ to the Dirichlet distribution.

\begin{thm}[Dirichlet Markov Ensemble]\label{th:DME}
  We have $\bM\sim\cU(\cM_n)$ if and only if the rows of $\bM$ are i.i.d.\ and
  follow the Dirichlet law of mean $(\frac{1}{n},\ldots,\frac{1}{n})$. The
  probability distribution $\cU(\cM_n)$ is invariant by permutations of rows
  and columns.
\end{thm}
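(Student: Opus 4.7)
The plan is to exploit the product structure $\cM_n=\Lambda_n^n$: the defining constraint of $\cM_n$ is that each row separately lies in the simplex $\Lambda_n$, and no constraint couples different rows. So the trace of Lebesgue measure on $\cM_n$ should factorize into a product, forcing the rows to be i.i.d., while each row should be distributed according to the uniform (Hausdorff) measure on $\Lambda_n$; this latter measure will then be identified with the symmetric Dirichlet law. Concretely, I would first observe that the affine hull of $\cM_n$ is the $n(n-1)$-dimensional subspace $E=\BRA{\bM\in\dR^{n^2}:\bM\mathbf{1}=\mathbf{1}}$, and that $E$ is the orthogonal product in $\dR^{n^2}$ of $n$ copies of the affine hyperplane $H=\BRA{x\in\dR^n:x_1+\cdots+x_n=1}$, one per row. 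The trace of Lebesgue measure on $\cM_n$ is the $n(n-1)$-dimensional Hausdorff measure on $E$ restricted to $\cM_n$, and because the $n$ factors of $E$ are mutually orthogonal, this Hausdorff measure factorizes as the $n$-fold product of the $(n-1)$-dimensional Hausdorff measures on $H$. After normalization, this shows that the rows of $\bM$ are i.i.d.\ under $\cU(\cM_n)$, with common law the normalized Hausdorff measure on $\Lambda_n$.

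Next I would identify this uniform measure on $\Lambda_n$ with the symmetric Dirichlet law. Parametrizing $\Lambda_n$ by its first $n-1$ coordinates via $(x_1,\dots,x_{n-1})\mapsto(x_1,\dots,x_{n-1},\,1-x_1-\cdots-x_{n-1})$ defines an affine chart from the standard simplex $S=\BRA{(x_1,\dots,x_{n-1})\in\dR_+^{n-1}:x_1+\cdots+x_{n-1}\leq 1}$ onto $\Lambda_n$; a direct Gram-determinant calculation gives the constant Jacobian $\sqrt{n}$. Hence the pushforward of the normalized Hausdorff measure on $\Lambda_n$ is the normalized Lebesgue measure on $S$, with density $(n-1)!\,\mathbf{1}_S$. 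This is exactly the density of the Dirichlet distribution on $\Lambda_n$ with parameters $(\alpha_1,\dots,\alpha_n)=(1,\dots,1)$, whose mean is $(1/n,\dots,1/n)$.

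For the permutation invariance, permuting rows of $\bM$ permutes the i.i.d.\ factors of $\Lambda_n^n$ and therefore preserves the product measure. Permuting columns applies the same coordinate permutation of $\dR^n$ to each row; on each factor this is an isometry of $H$ stabilizing $\Lambda_n$, hence preserves the Hausdorff measure on $\Lambda_n$, equivalently the symmetric Dirichlet law. The main point that requires some care is the factorization of the Hausdorff measure on the orthogonal product $E=H^n\subset\dR^{n^2}$; I expect this to be the only mildly technical step, as it is routine once one chooses an orthonormal frame inside each factor $H$ and assembles the resulting frames into an orthonormal frame of $E$. All other steps reduce either to an explicit change-of-variable computation or to an obvious symmetry.
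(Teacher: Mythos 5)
Your proposal is correct and follows essentially the same route as the paper: identify $\cM_n$ with the orthogonal product $(\Lambda_n)^n$ so that the trace of Lebesgue measure factorizes into i.i.d.\ rows, identify the normalized Hausdorff (uniform) measure on $\Lambda_n$ with $\cD_n(1,\ldots,1)$, and deduce column invariance from the exchangeability of that law (equivalently, from coordinate permutations being isometries of the simplex). The only difference is that you carry out explicitly, via the affine chart with constant Jacobian $\sqrt{n}$, the identification that the paper delegates to a citation of Schechtman--Zinn, which is a harmless (indeed welcome) amplification.
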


\begin{cor}\label{co:DME}
  If $\bM\sim\cU(\cM_n)$ then for every $1\leq i,j\leq n$, $\bM_{i,j}\sim
  \mathrm{Beta}(1,n-1)$ and  for every $1\leq i,i',j,j'\leq n$,
  $$
  \mathrm{Cov}(\bM_{i,j},\bM_{i',j'}) =\begin{cases}
    0 & \text{if $i\neq i'$} \\
    \frac{n-1}{n^2(n+1)} & \text{if $i=i'$ and $j=j'$} \\
    -\frac{1}{n^2(n+1)} & \text{if $i=i'$ and $j\neq j'$}.
  \end{cases}
  $$
  Moreover $\bM_{i,j}$ and $\bM_{i',j'}$ are independent if and only if $i\neq
  i'$.
\end{cor}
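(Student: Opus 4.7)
The plan is to derive everything from Theorem~\ref{th:DME}, which says that the rows of $\bM$ are i.i.d.\ Dirichlet$(1,\ldots,1)$ vectors on $\Lambda_n$ (the uniform law on $\Lambda_n$ corresponding to all Dirichlet parameters equal to $1$, giving mean $(1/n,\ldots,1/n)$). All claims then reduce to known one--row Dirichlet identities plus the independence of rows.

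First I would handle the marginals. For a Dirichlet$(\alpha_1,\ldots,\alpha_n)$ vector $(X_1,\ldots,X_n)$, the aggregation/marginal property says that $X_j$ has law $\mathrm{Beta}(\alpha_j,\alpha_0-\alpha_j)$, where $\alpha_0=\sum_k\alpha_k$. With all $\alpha_k=1$ this gives $\bM_{i,j}\sim\mathrm{Beta}(1,n-1)$ for every $i,j$, which already fixes $\mathbb{E}[\bM_{i,j}]=1/n$ and $\mathrm{Var}(\bM_{i,j})=(n-1)/(n^2(n+1))$ (the stated diagonal case $i=i'$, $j=j'$).

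Next I would compute the covariances. Entries in different rows are independent, so $\mathrm{Cov}(\bM_{i,j},\bM_{i',j'})=0$ whenever $i\neq i'$. For entries in the same row, I would invoke the standard Dirichlet covariance
\[
\mathrm{Cov}(X_j,X_{j'})=-\frac{\alpha_j\alpha_{j'}}{\alpha_0^2(\alpha_0+1)}\qquad (j\neq j'),
\]
which with $\alpha_k=1$ and $\alpha_0=n$ gives $-1/(n^2(n+1))$. One clean way to derive this formula without repeating classical computations is to use the representation $X_k=Y_k/(Y_1+\cdots+Y_n)$ with $Y_k$ i.i.d.\ Exponential$(1)$, or alternatively to exploit the constraint $X_1+\cdots+X_n=1$, which yields $0=\mathrm{Var}(\sum_k X_k)=n\,\mathrm{Var}(X_1)+n(n-1)\,\mathrm{Cov}(X_1,X_2)$ and hence determines $\mathrm{Cov}(X_1,X_2)$ from $\mathrm{Var}(X_1)$ by symmetry.

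Finally, for the independence statement: rows are independent by Theorem~\ref{th:DME}, so $i\neq i'$ gives independence of $\bM_{i,j}$ and $\bM_{i',j'}$. Conversely, if $i=i'$ the two entries are dependent: either they coincide (when $j=j'$) or their covariance $-1/(n^2(n+1))$ is nonzero, and nonzero covariance forbids independence. I expect no serious obstacle here; the only point worth stating carefully is that ``Dirichlet of mean $(1/n,\ldots,1/n)$'' coming out of Theorem~\ref{th:DME} really corresponds to parameters $(1,\ldots,1)$, since it is the \emph{uniform} law on $\Lambda_n$, so that the generic Dirichlet moment formulas apply with $\alpha_0=n$.
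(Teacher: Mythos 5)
Your proof is correct and follows essentially the same route as the paper: Theorem~\ref{th:DME} identifies the rows as i.i.d.\ $\cD_n(1,\ldots,1)$ vectors, and the corollary then drops out of the standard Dirichlet facts (Beta$(1,n-1)$ marginals, the mean and covariance formulas with $\NRM{a}_1=n$, and the exchangeability relation $0=n\,\mathrm{Var}(P_1)+n(n-1)\,\mathrm{Cov}(P_1,P_2)$) together with independence of distinct rows, exactly as the paper does. Your handling of the independence claim, including noting that same-row entries fail independence either by nonzero covariance or, when $j=j'$, by non-degeneracy of $\bM_{i,j}$, is a sound completion of the argument the paper leaves implicit.
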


The set $\cM_n$ is also a compact semi--group for the matrix product. The
following two theorems concern the translation invariance of $\cU(\cM_n)$ and
the question of the existence of an idempotent probability distribution on
$\cM_n$.

\begin{thm}[Translation invariance]\label{th:perm-trans}
  For every $\bT\in\cM_n$, the law $\cU(\cM_n)$
  is invariant by the left translation $\bM\mapsto \bT\bM$ if and only if
  $\bT$ is a permutation matrix. The same holds true for the right translation
  $\bM\mapsto \bM\bT$.
\end{thm}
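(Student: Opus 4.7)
The plan is to handle each direction separately and to treat both translations by the same moment-matching strategy. For the easy implication, if $\bT$ is a permutation matrix then $\bT\bM$ is obtained from $\bM$ by permuting its rows; since by Theorem~\ref{th:DME} the rows of $\bM\sim\cU(\cM_n)$ are i.i.d., their joint law is symmetric and $\bT\bM\sim\bM$. For the right translation, $\bM\bT$ permutes the columns of $\bM$; the Dirichlet law of mean $(1/n,\ldots,1/n)$ is exchangeable (invariant under coordinate permutations), so each row $\bM_i\bT$ is distributed as $\bM_i$, and independence across $i$ is preserved, yielding $\bM\bT\sim\bM$.

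For the converse under left translation, assume $\bT\bM\sim\bM$ and match the first two moments via Corollary~\ref{co:DME}. The mean is automatic: $\dE[(\bT\bM)_{i,j}]=(\sum_k\bT_{i,k})/n=1/n$. Using the independence of the rows of $\bM$ and the fact that they are identically distributed, one obtains
\begin{equation*}
\mathrm{Cov}((\bT\bM)_{i,j},(\bT\bM)_{i,j'})=\left(\sum_{k}\bT_{i,k}^{2}\right)\mathrm{Cov}(\bM_{1,j},\bM_{1,j'}).
\end{equation*}
Taking $j=j'$ and invoking the strict positivity of the Dirichlet variance forces $\sum_k\bT_{i,k}^{2}=1$. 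Since $\bT_{i,k}\geq 0$ and $\sum_k\bT_{i,k}=1$, the elementary bound $\sum_k\bT_{i,k}^{2}\leq(\max_k\bT_{i,k})\cdot\sum_k\bT_{i,k}\leq 1$ holds with equality iff the $i$-th row of $\bT$ equals some standard basis vector $e_{\sigma(i)}$. Likewise for $i\neq i'$ the independence of the rows of $\bM$ requires $\mathrm{Cov}((\bT\bM)_{i,j},(\bT\bM)_{i',j})=0$, i.e.\ $\sum_k\bT_{i,k}\bT_{i',k}=0$, so $\sigma(i)\neq\sigma(i')$. Hence $\sigma$ is a bijection and $\bT$ is a permutation matrix.

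For the right translation $\bM\mapsto\bM\bT$, the rows of $\bM\bT$ remain independent and identically distributed, so it suffices to ensure $\bM_i\bT\sim\bM_i$. Matching the mean now requires $\sum_k\bT_{k,j}=1$ for every $j$, i.e.\ $\bT$ must be doubly stochastic. A direct computation using the Dirichlet covariance from Corollary~\ref{co:DME} together with the column-sum relation gives
\begin{equation*}
\mathrm{Var}((\bM\bT)_{i,j})=\frac{1}{n(n+1)}\sum_{k}\bT_{k,j}^{2}-\frac{1}{n^{2}(n+1)},
\end{equation*}
which compared with $\mathrm{Var}(\bM_{i,j})=\frac{n-1}{n^{2}(n+1)}$ yields $\sum_k\bT_{k,j}^{2}=1$ for every $j$. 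By the same elementary observation, each column of $\bT$ is a standard basis vector; combined with the row-stochastic hypothesis $\bT\in\cM_n$, this forces $\bT$ to be a permutation matrix. The argument is mechanical rather than delicate; the only genuine input beyond Corollary~\ref{co:DME} is the rigidity principle that a nonnegative vector summing to $1$ has sum of squares equal to $1$ if and only if it is a canonical basis vector.
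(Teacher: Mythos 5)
Your proposal is correct and follows essentially the same route as the paper: matching the first two moments via Corollary~\ref{co:DME}, extracting $\sum_k\bT_{i,k}^2=1$ (resp.\ $\sum_k\bT_{k,j}^2=1$ after deducing double stochasticity from the mean), and using the rigidity that a nonnegative vector with unit sum and unit sum of squares is a canonical basis vector. The only cosmetic difference is in the left-translation case, where you conclude that the $1$'s sit in distinct columns from the vanishing of cross-row covariances, whereas the paper invokes directly the independence of the rows of $\bT\bM$; both are valid and equivalent in substance.
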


\begin{thm}[Idempotent distributions]\label{th:negative}
  There is no probability distribution on $\cM_n$, absolutely continuous with
  respect to $\cU(\cM_n)$, with full support, and which is invariant by every
  left translations $\bM\mapsto\bT\bM$ where $\bT$ runs over $\cM_n$. The same
  holds true for right translations.
\end{thm}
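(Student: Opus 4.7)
The plan is to exploit the semigroup structure of $\cM_n$: I want to find a single $\bT\in\cM_n$ whose left multiplication collapses every Markov matrix onto a proper subvariety of $\cM_n$, so that no probability measure absolutely continuous with respect to $\cU(\cM_n)$ can be left-invariant under it. The natural candidate is a rank-one Markov matrix, since such a matrix sends all of $\cM_n$ into an $(n-1)$-parameter family of matrices with identical rows.

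For the left translation case, I would fix $n\geq 2$ (the statement is vacuous for $n=1$) and any $v\in\Lambda_n$, and set $\bT=\mathbf{1}_n v^\top\in\cM_n$. For every $\bM\in\cM_n$ one has $\bT\bM=\mathbf{1}_n(v^\top\bM)$, which belongs to the subset $\cR_1\subset\cM_n$ of rank-one Markov matrices, i.e.\ those whose $n$ rows are identical. By Theorem~\ref{th:DME}, the rows of $\bM$ under $\cU(\cM_n)$ are i.i.d.\ Dirichlet, hence absolutely continuous on $\Lambda_n$, so the probability that any two of them coincide is zero; thus $\cU(\cM_n)(\cR_1)=0$. (Equivalently, $\cR_1$ is the image of the smooth injection $v\mapsto\mathbf{1}_n v^\top$ from $\Lambda_n$ to $\cM_n$, a submanifold of dimension $n-1<n(n-1)=\dim\cM_n$.) If $\mu\ll\cU(\cM_n)$ were invariant under $L_\bT:\bM\mapsto\bT\bM$, then on one hand $\mu(\cR_1)=0$ by absolute continuity, while on the other hand the push-forward $(L_\bT)_*\mu$ is supported in $\cR_1$, giving $(L_\bT)_*\mu(\cR_1)=1$. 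This contradicts the invariance identity $\mu=(L_\bT)_*\mu$.

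The right translation case degenerates even further: with the same $\bT=\mathbf{1}_n v^\top$, the Markov identity $\bM\mathbf{1}_n=\mathbf{1}_n$ yields $\bM\bT=\mathbf{1}_n v^\top=\bT$ for every $\bM\in\cM_n$, so $R_\bT:\bM\mapsto\bM\bT$ is the constant map sending $\cM_n$ to $\{\bT\}$; invariance would then force $\mu=\delta_\bT$, again contradicting $\mu\ll\cU(\cM_n)$. The only substantive decision is picking the right $\bT$; once a rank-one Markov matrix is chosen the proof is a one-line support argument, and I do not foresee any real obstacle. I note in passing that the full-support hypothesis stated in the theorem is never used in this approach---absolute continuity alone is enough to derive the contradiction in both directions.
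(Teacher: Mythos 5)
Your proof is correct, but it takes a genuinely different route from the paper. The paper deduces the theorem from the structure theory of compact semi--groups: it observes that a measure invariant under every left (or right) translation is convolution idempotent, invokes Rosenblatt's lemma (lemma \ref{le:rosenblatt}, which needs the full--support hypothesis so that the support of $\mu$ generates $\cM_n$) to conclude that the mass of such a measure must concentrate on the kernel of $\cM_n$, identifies that kernel as the set of Markov matrices with equal rows, and derives a contradiction with absolute continuity since this kernel is $\cU(\cM_n)$--negligible. You short--circuit all of this: you pick a single rank--one element $\bT=\mathbf{1}v^\top$ of that same kernel and note that left translation by $\bT$ maps all of $\cM_n$ into the equal--rows set (a $\cU(\cM_n)$--null set), while right translation by $\bT$ is even the constant map $\bM\mapsto\bT$ because $\bM\mathbf{1}=\mathbf{1}$; in either case the push--forward of any $\mu\ll\cU(\cM_n)$ charges a null set with full mass, contradicting invariance. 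Your argument is more elementary, needs no idempotence or kernel machinery, dispenses with the full--support hypothesis, and in fact proves the stronger statement that invariance under even one rank--one translation is impossible for an absolutely continuous law; what the paper's route buys instead is the structural picture --- the identification of the kernel of $\cM_n$, the behaviour of convolution powers $\mu^{*k}$ concentrating on it, and the connection to the classification of (non fully supported) idempotent measures on compact semi--groups --- which is of independent interest in the surrounding discussion. Conceptually the two proofs are linked: your chosen $\bT$ lies precisely in the kernel the paper exhibits, and your observation that translation by it collapses $\cM_n$ onto (or into) the kernel is the intuitive content the paper attributes to non--invertible translations.
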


The proofs of theorems \ref{th:DME}, \ref{th:perm-trans}, \ref{th:negative}
and corollary \ref{co:DME} are given in section \ref{se:DME}.

\subsubsection*{Asymptotic behavior of singular values and eigenvalues}

The spectral properties of large dimensional random matrices are connected to
many areas of mathematics, see for instance the books
\cite{MR2129906,MR1746976,bai-silverstein-book,anderson-guionnet-zeitouni,forrester,MR2168344}
and the survey \cite{MR1711663}. If $\bM\sim\cU(\cM_n)$, then almost surely,
the real matrix $\bM$ is invertible, non--normal, with neither independent nor
centered entries. The singular values of certain large dimensional centered
random matrices with independent rows is considered for instance in
\cite{MR2280648} and \cite{MR2265341,pajor-pastur}.

For any square $n\times n$ matrix $\bA$ with real or complex entries, let the
complex eigenvalues $\lambda_1(\bA),\ldots,\lambda_n(\bA)$ of $\bA$ be labeled
so that $\ABS{\lambda_1(\bA)}\geq\cdots\geq\ABS{\lambda_n(\bA)}$. The
\emph{spectral radius} of $\bA$ is thus given by $|\lambda_1(\bA)|=\max_{1\leq
  k\leq n}|\lambda_k(\bA)|$. The \emph{empirical spectral distribution} (ESD)
of $\bA$ is the discrete probability distribution on $\dC$ with at most $n$
atoms defined by
$$
\frac{1}{n}\sum_{k=1}^n \delta_{\lambda_k(\bA)}.
$$
The \emph{singular values} $s_1(\bA)\geq\cdots\geq s_n(\bA)\geq0$ of $\bA$ are
the eigenvalues of the positive semi--definite Hermitian matrix
$(\bA\bA^*)^{1/2}$ where
$$
\bA^*=\OL{\bA}^\top
$$
denotes the conjugate transpose of $\bA$. Namely, for every $1\leq k\leq n$,
$$
s_k(\bA)=\lambda_k(\sqrt{\bA\bA^*})=\sqrt{\lambda_k(\bA\bA^*)}.
$$
Note that $\bA\bA^*$ and $\bA^*\bA$ share the same spectrum. The atoms of the
ESD of $\sqrt{\bA\bA^*}$ are $s_1(\bA),\ldots,s_n(\bA)$. The singular values
of $\bA$ have a clear geometrical interpretation: the linear operator $\bA$
maps the unit ball to an ellipsoid, and the singular values of $\bA$ are
exactly the half--lengths of its principal axes. In particular,
$s_1(\bA)=\max_{\NRM{x}_2=1}\NRM{\bA x}_2=\NRM{\bA}_{2\to 2}$ while
$s_n(\bA)=\min_{\NRM{x}_2=1}\NRM{\bA x}_2=\NRM{\bA^{-1}}_{2\to2}^{-1}$.
Moreover, $\bA$ has exactly $\mathrm{rank}(\bA)$ non zero singular values. The
relationship between the eigenvalues and the singular values are captured by
the Weyl--Horn inequalities
$$
\forall k\in\{1,\ldots,n\},
\quad
\prod_{i=1}^k|\lambda_i(\bA)|
\leq
\prod_{i=1}^k s_i(\bA)
\quad
\text{with equality when $k=n$},
$$
see \cite{MR0061573,MR0030693}.
If $\bA$ is normal, i.e.\ $\bA\bA^*=\bA^*\bA$, then
$s_k(\bA)=\ABS{\lambda_k(\bA)}$ for every $1\leq k\leq n$. Back to our
Dirichlet Markov Ensemble, if $\bM\sim\cU(\cM_n)$ then $\bM$ is almost surely
a non--normal matrix, and thus one cannot express the singular values of $\bM$
in terms of the eigenvalues of $\bM$. The following theorem gives the
asymptotic behavior of the empirical distribution built from the singular values of $\bM$.

\begin{thm}[Singular values for Dirichlet Markov Ensemble]\label{th:sym}
  Let $(X_{i,j})_{1\leq i,j<\infty}$ be an infinite array of i.i.d.\
  exponential random variables of unit mean. For every $n$, let $\bM$ be the
  $n\times n$ random matrix defined for every $1\leq i,j\leq n$ by
  $$
  \bM_{i,j}=\frac{X_{i,j}}{\sum_{k=1}^n X_{i,k}}.
  $$
  Then $\bM\sim\cU(\cM_n)$ and 
  $$
  \dP\PAR{
    \frac{1}{n}\sum_{k=1}^n\delta_{\lambda_k(n\bM\bM^\top)}
    \overset{\text{w}}{\underset{n\to\infty}{\longrightarrow}}
    \cP_1}=1
  $$
  where $\overset{\text{w}}{\to}$ denotes the weak convergence of probability
  distributions and $\cP_1$ the Marchenko--Pastur distribution defined in table
  \ref{ta:distros}. In other words,
  $$
  \dP\PAR{
    \frac{1}{n}\sum_{k=1}^n\delta_{s_k(\sqrt{n}\,\bM)}
    \overset{\text{w}}{\underset{n\to\infty}{\longrightarrow}}
    \cQ_1}=1
  $$
  where $\cQ_1$ denotes the Wigner quarter--circle distribution 
  defined in table \ref{ta:distros}.
\end{thm}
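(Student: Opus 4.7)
My plan is to reduce the claim to the classical Marchenko--Pastur theorem for an $n\times n$ matrix of i.i.d.\ centred entries, by peeling away two perturbations that do not affect the limiting ESD. Set $S_i=\sum_{k=1}^n X_{i,k}$, $\bD=\mathrm{diag}(S_1/n,\ldots,S_n/n)$, $\bY_{i,j}=X_{i,j}/\sqrt n$, and $\bC_{i,j}=(X_{i,j}-1)/\sqrt n$. Then $\sqrt n\,\bC$ has i.i.d.\ centred entries of unit variance with all moments finite, and the very definition of $\bM$ yields the factorisation
\[
\sqrt n\,\bM=\bD^{-1}\bY=\bD^{-1}\bC+n^{-1/2}\bD^{-1}\bJ_n,
\]
where $\bJ_n$ denotes the $n\times n$ all-ones matrix, of rank one.

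The first perturbation is the rank-one summand $n^{-1/2}\bD^{-1}\bJ_n$. The Bai--Silverstein inequality for cumulative ESDs of $\bA\bA^*$ (Theorem A.43 of their book) then gives
\[
\NRM{F^{n\bM\bM^\top}-F^{\bD^{-1}\bC\bC^\top\bD^{-1}}}_\infty\leq \frac{1}{n},
\]
so the two sequences of ESDs share the same a.s.\ weak limit. The second perturbation is the diagonal factor $\bD^{-1}$, which I will handle through the uniform law of large numbers $\NRM{\bD-\bI}_{\mathrm{op}}=\max_i\ABS{S_i/n-1}\to 0$ a.s. This follows from Cram\'er's exponential Chebyshev bound $\dP(\ABS{S_i/n-1}\geq\varepsilon)\leq 2e^{-c(\varepsilon)n}$, a union bound over $i\leq n$, and Borel--Cantelli, crucially using that the exponential law has a finite MGF near the origin. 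Combined with Bai--Yin's theorem $\NRM{\bC}_{\mathrm{op}}\to 2$ a.s., the Weyl singular value inequality
\[
\ABS{s_k(\bD^{-1}\bC)-s_k(\bC)}\leq\NRM{\bD^{-1}-\bI}_{\mathrm{op}}\,\NRM{\bC}_{\mathrm{op}}
\]
shows that the ordered singular values of $\bD^{-1}\bC$ and $\bC$ agree uniformly in $k$ up to $o(1)$, hence the ESDs of $\bD^{-1}\bC\bC^\top\bD^{-1}$ and of $\bC\bC^\top$ converge to the same a.s.\ weak limit.

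Finally, the classical Marchenko--Pastur theorem applied to the matrix $\sqrt n\,\bC$ (i.i.d.\ centred entries of unit variance, finite fourth moment, aspect ratio one) asserts that the ESD of $\bC\bC^\top$ converges weakly a.s.\ to $\cP_1$. Chaining this with the rank-one and diagonal perturbations above delivers the first convergence in the theorem; the second follows by push-forward under $x\mapsto\sqrt x$, which sends $\cP_1$ to $\cQ_1$. The main obstacle in the scheme is not the Marchenko--Pastur input itself but securing the two a.s.\ uniform controls $\NRM{\bD-\bI}_{\mathrm{op}}=o(1)$ and $\NRM{\bC}_{\mathrm{op}}=O(1)$: the former is obtained from the sub-exponential tails of the entries, and the latter reduces to Bai--Yin's theorem. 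Once those are in place, everything else is elementary spectral perturbation theory.
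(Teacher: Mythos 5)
Your argument is correct, but it reaches the Marchenko--Pastur limit by a mechanically different route than the paper. The paper factorizes $\sqrt{n}\,\bM=(n\bD)(n^{-1/2}\,\bE)$ with $\bE$ the \emph{raw, non-centered} exponential array and $n\bD$ the diagonal of inverse row means, then (i) quotes its theorem \ref{th:MP}, which already yields the quarter--circle limit for the singular values of the non-centered $n^{-1/2}\,\bE$ (the rank-one mean is absorbed into that statement, an interlacing bound $s_2(\bE)\leq s_1(\bE-\dE\bE)=O(\sqrt{n})$ being used only to keep the truncated empirical measure tight), and (ii) removes the diagonal factor \emph{multiplicatively} through lemma \ref{le:sidia}, $s_n(n\bD)\,s_k(n^{-1/2}\,\bE)\leq s_k(\sqrt{n}\,\bM)\leq s_1(n\bD)\,s_k(n^{-1/2}\,\bE)$, combined with the uniform law of large numbers of lemma \ref{le:LLN}. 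You instead center first, dispose of the mean as an \emph{additive} rank-one perturbation at the level of the ESD via the Bai--Silverstein rank inequality, and remove the diagonal additively via Weyl's inequality, which obliges you to control $\NRM{\bC}_{2\to2}$ by Bai--Yin (finite fourth moment) and $\max_i\ABS{S_i/n-1}$ by a Chernoff/union-bound/Borel--Cantelli argument (finite exponential moments). Both chains are sound; yours is more self-contained (you prove the uniform LLN from scratch and only need the classical centered Marchenko--Pastur theorem), while the paper's multiplicative squeeze requires no operator-norm bound on the centered matrix and hence extends beyond the exponential case with weaker moment assumptions, as the author points out at the end of the proof. Two small items to complete your write-up: the theorem also asserts $\bM\sim\cU(\cM_n)$, which you should dispatch in one line from theorem \ref{th:DME} together with the Gamma (exponential) representation of the Dirichlet law $\cD_n(1,\ldots,1)$; and since Weyl's bound actually involves $\NRM{\bD^{-1}-\bI}_{2\to2}=\max_i\ABS{n/S_i-1}$ rather than $\NRM{\bD-\bI}_{2\to2}$, state explicitly that the former tends to $0$ as well, because $\min_i S_i/n$ is almost surely eventually bounded away from $0$.
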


\medskip

\begin{table}
\begin{center}
{ \footnotesize
\begin{tabular}{c|c|c}
  \textbf{Probability distribution name}
  & \textbf{Support} 
  & \textbf{Lebesgue density} 
  \\ \hline
  &
  & \\
  Circle or circular law $\cC_\sigma$
  & $\{z\in\dC;|z|\leq\sigma\}\subset\dC$
  & $z\mapsto(\pi\sigma^2)^{-1}$ \\
  &
  & \\
  Wigner semi--circle distribution $\cW_\sigma$
  & $[-2\sigma,+2\sigma]\subset\dR$ 
  & $x\mapsto (2\pi\sigma^2)^{-1} \sqrt{4\sigma^2-x^2}$ \\
  &
  & \\
  Wigner quarter--circle distribution $\cQ_\sigma$
  & $[0,2\sigma]\subset\dR$ 
  & $x\mapsto (\pi\sigma^2)^{-1} \sqrt{4\sigma^2-x^2}$ \\
  &
  & \\
  Marchenko--Pastur distribution $\cP_\sigma$
  & $[0,4\sigma^2]\subset\dR$ 
  & $x\mapsto (2\pi \sigma^2 x)^{-1} \sqrt{x(4\sigma^2-x)}$ 
\end{tabular}
}
\caption{Some of the remarkable probability distributions in random matrices.}
\label{ta:distros}
\end{center}
\end{table}

\medskip

Following the notations of table \ref{ta:distros}, for every real fixed
parameter $\sigma>0$, every real random variable $W$, and every complex random
variable $Z=U+\sqrt{-1}V$ with $U=\mathrm{RealPart}(Z)$ and
$V=\mathrm{ImaginaryPart}(Z)$, we have, by a change of variables,
$$
\PAR{W^2\sim\cP_\sigma\,\Leftrightarrow\,|W|\sim\cQ_\sigma}
\quad\text{and}\quad
\PAR{W\sim\cW_\sigma\,\Rightarrow\,W^2\sim\cP_\sigma\ \text{and}\ |W|\sim\cQ_\sigma}.
$$
Moreover, we have, simply by using the Cram\'er-Wold theorem,
$$
Z\sim\cC_{2\sigma}\ \Leftrightarrow\ 
\PAR{\mathrm{RealPart}(e^{\sqrt{-1}\theta}Z)\sim \cW_\sigma
\ \text{for every}\ \theta\in[0,2\pi)}.
$$
In particular, we have
$$
Z\sim\cC_{2\sigma}\ \Rightarrow\ U\sim\cW_\sigma\ \text{and}\ V\sim\cW_\sigma.
$$
Beware however that $U$ and $V$ are not independent random variables!
Furthermore, if $\dP(|Z|=\sigma;V\geq0)=1$ then $Z$ follows the uniform
distribution over the upper half circle of radius $\sigma$ if and only if $U$
follows the so--called arc--sine distribution on $[-\sigma,+\sigma]\subset\dR$
with Lebesgue density $x\mapsto (\pi\sqrt{\sigma^2-x^2})^{-1}$.

The proof of theorem \ref{th:sym} is given in section \ref{se:proofs}. Since
$\ABS{\lambda_1(\bA)}\leq s_1(\bA)$ for any square matrix $\bA$, and since
$\lambda_1(\bM)=1$, we have for every $n\geq1$
$$
s_1(\bM)\geq \ABS{\lambda_1(\bM)}=1.
$$
However, theorem \ref{th:sym} implies in particular that almost surely
$$
\frac{1}{n}\,\mathrm{Card}\BRA{1\leq k\leq n\text{ such that }
s_k(\bM)>\frac{2}{\sqrt{n}}} \underset{n\to\infty}{\longrightarrow}0.
$$

\subsubsection*{Random $Q$-matrices}

Bryc, Dembo, and Jiang studied in \cite{MR2206341} the limiting spectral
distribution of \emph{random Hankel, Markov, and Toeplitz matrices}. Let us
explain briefly what they mean by ``random Markov matrices''. They proved the
following theorem (see \cite[th. 1.3]{MR2206341} and also \cite{MR2399292}) :
let $(\bX_{i,j})_{1<i<j<\infty}$ be an infinite triangular array of i.i.d.\
real random variables of mean $0$ and variance $1$. Let $\bQ$ be the symmetric
$n\times n$ random matrix defined for every $1\leq i\leq j\leq n$ by
$\bQ_{i,j}=\bQ_{j,i}=X_{i,j}$ if $i<j$, and
$$
\bQ_{i,i}=
-\ds{\sum_{\SSK{1\leq k\leq n\\k\neq i}}} \bQ_{i,k} 
\quad\text{for every $1\leq i\leq n$.}
$$
Then, almost surely, the ESD of $n^{-1/2}\,\bQ$ converges as $n\to\infty$ to the
free convolution\footnote{This limiting spectral distribution is a symmetric
  law on $\dR$ with smooth bounded density of unbounded support. See
  \cite{MR1746976} or \cite{MR1488333} for Voiculescu's free convolution.} of
a semi--circle law and a standard Gaussian law.

This result gives an answer to a precise question raised by Bai in his 1999
review article \cite[sec. 6.1.1]{MR1711663}. The matrix $\bQ$ is not Markov.
However, it looks like a Markov generator, i.e.\ a $Q$-matrix, since its rows
sum up to $0$. Unfortunately, the assumptions do not allow the off--diagonal
entries of $\bQ$ to have non--negative support, and thus $\bQ$ cannot be almost
surely a Markov generator. In particular, if $\bI$ stands for the identity
matrix of size $n\times n$, the symmetric matrix $\bM=\bQ+\bI$ cannot be
almost surely Markov.

\subsubsection*{Eigenvalues and the circular law} 

If $\bM$ is as in theorem \ref{th:sym}, then $\lambda_1(\sqrt{n}\,\bM)=\sqrt{n}$
goes to $+\infty$ as $n\to\infty$ while its weight in the ESD is $1/n$. Thus,
it does not contribute to the limiting spectral distribution of $\sqrt{n}\,\bM$.
Numerical simulations (see figure \ref{fi:cirlaw-DME}) suggest that the
empirical distribution of the rest of the spectrum tends as $n\to\infty$ to
the uniform distribution on the unit disc. One can formulate this conjecture
as follows.

\begin{conj}[Circle law for the Dirichlet Markov Ensemble]\label{cj:cirlaw}
  If $\bM$ is as in theorem \ref{th:sym}, then
  $$
  \dP\PAR{\frac{1}{n}\sum_{k=1}^n\delta_{\lambda_k(\sqrt{n}\,\bM)}
    \overset{\text{w}}{\underset{n\to\infty}{\longrightarrow}}
    \cC_1}=1
  $$
  where $\overset{\text{w}}{\to}$ denotes the weak convergence of probability
  distributions and $\cC_1$ the uniform distribution over the unit disc
  $\{z\in\dC;\ABS{z}\leq1\}$ as defined in table \ref{ta:distros}.
\end{conj}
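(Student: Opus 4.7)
The standard approach to a circular-law conjecture of this type is Girko's Hermitization method via the logarithmic potential, and that is what I would try here. Writing $\mu_n$ for the empirical spectral distribution of $\sqrt{n}\,\bM$ and using the identity
$$
U_{\mu_n}(z)=-\frac{1}{n}\log\left|\det(\sqrt{n}\,\bM-z\bI)\right|=-\frac{1}{n}\sum_{k=1}^{n}\log s_k(\sqrt{n}\,\bM-z\bI),
$$
weak convergence of $\mu_n$ to $\cC_1$ would follow from three ingredients: (i) weak almost sure convergence, for every $z\in\dC$, of the empirical singular value distribution $\nu_n^z$ of $\sqrt{n}\,\bM-z\bI$ to an explicit limit $\nu_z$; (ii) uniform integrability in $n$, almost surely, of $\log(\cdot)$ against $\nu_n^z$ near the origin; and (iii) identification of the resulting logarithmic potential $z\mapsto-\int\log x\,d\nu_z(x)$ with that of the uniform distribution $\cC_1$ on the unit disc.

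For (i) and (iii) I would exploit the representation from theorem \ref{th:sym}: $\sqrt{n}\,\bM=\bD^{-1}\cdot n^{-1/2}\bX$, where $\bX=(X_{i,j})$ has i.i.d.\ unit-mean exponential entries and $\bD=\mathrm{diag}(S_1/n,\ldots,S_n/n)$ with $S_i=\sum_j X_{i,j}$. Classical concentration gives $\|\bD-\bI\|_{\mathrm{op}}\to 0$ almost surely at rate $\sqrt{(\log n)/n}$, so multiplication by $\bD^{-1}$ is a small operator-norm perturbation that, by Stieltjes-transform continuity, does not affect bulk singular value distributions. Decomposing $\bX=\bJ+\bY$ with $\bJ$ the all-ones matrix and $\bY$ centered i.i.d., the summand $n^{-1/2}\bJ$ is a rank-one perturbation that accounts for the Perron eigenvalue $\sqrt{n}$ without disturbing the bulk. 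The circular law for i.i.d.\ matrices with finite variance, applied to $n^{-1/2}\bY$, then supplies $\nu_z$ and pins down its logarithmic potential as that of $\cC_1$.

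The main obstacle, as in every rigorous circular-law proof, will be step (ii): quantitative control of the small singular values of $\sqrt{n}\,\bM-z\bI$. What one needs is a bound of the form $s_n(\sqrt{n}\,\bM-z\bI)\geq n^{-C}$ with probability at least $1-O(n^{-\alpha})$, together with a polynomial bound on the number of singular values in $[0,n^{-\varepsilon}]$. The difficulty is sharpened by the fact that the rows of $\bM$, although independent, are supported on the simplex $\Lambda_n$ and therefore have strongly dependent entries, and by the additional coupling through the random diagonal $\bD$. My plan would be to use the identity $\sqrt{n}\,\bM-z\bI=\bD^{-1}(n^{-1/2}\bX-z\bD)$ to reduce the invertibility problem to one for $n^{-1/2}\bX-z\bD$, and then adapt the Rudelson--Vershynin / Tao--Vu small-ball and distance-to-subspace techniques to the i.i.d.\ exponential setting, where the simplex dependence has disappeared. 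The hardest regime is expected to be $|z|$ close to $1$, where the candidate limiting spectrum touches the boundary of the unit disc and quantitative invertibility is most delicate.
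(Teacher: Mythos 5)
You should be clear at the outset that the statement you are proving is stated in the paper as a \emph{conjecture}: the paper contains no proof of it, and in fact the discussion at the end of section \ref{se:proofs} sketches exactly the route you propose --- Girko's Hermitization through the logarithmic potential, the factorization $\sqrt{n}\,\bM=(n\bD)\,(n^{-1/2}\,\bE)$ with $\bE$ the i.i.d.\ exponential matrix, lemma \ref{le:LLN} to dispose of $\frac{1}{n}\log\ABS{\det(n\bD)}$, and the circular law for non--centered i.i.d.\ matrices \cite{djalil-nccl} for the term coming from $n^{-1/2}\,\bE$ --- and then states explicitly that the decisive comparison
$$
\frac{1}{n}\log\ABS{\det\PAR{n^{-1/2}\,\bE-z(n\bD)^{-1}}}
-\frac{1}{n}\log\ABS{\det\PAR{n^{-1/2}\,\bE-z\bI}}
\underset{n\to\infty}{\longrightarrow}0
$$
is open. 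So your proposal does not take a different route from the paper; it retraces the paper's own heuristic, and like the paper it stops short of the actual mathematical content.

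The genuine gap is your step (ii), together with the part of (i)/(iii) that involves the random shift. Declaring that you would ``adapt Rudelson--Vershynin / Tao--Vu small-ball and distance-to-subspace techniques'' is a research program, not a proof: the required statements --- a lower bound $s_n(\sqrt{n}\,\bM-z\bI)\geq n^{-C}$ with high probability and a quantitative bound on the number of singular values below $n^{-\varepsilon}$, uniformly enough in $z$ to get almost sure uniform integrability of $\log$ against the empirical singular value measures --- are precisely where the difficulty of the conjecture lives, and none of them is established by your outline. Moreover, your reduction to $n^{-1/2}\,\bX-z\bD$ does not simply ``make the simplex dependence disappear'': the diagonal $\bD$ is a function of the rows of $\bX$, so the shift $z\bD$ is random and correlated with the matrix whose invertibility you must control, which breaks the independence structure that the standard distance-to-a-subspace arguments exploit; handling this (or, equivalently, proving the displayed comparison above) requires new input, not a routine adaptation. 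Your appeal to ``Stieltjes-transform continuity'' for the multiplicative perturbation by $\bD^{-1}$ is also only a bulk statement; it gives nothing about the smallest singular values, which is where weak convergence of $\nu_n^z$ is insufficient for the Hermitization to go through (see \cite{tao-vu-krishnapur-circular}). As the paper indicates, partial progress along these lines is in \cite{chafai-cir}, but as written your proposal leaves the conjecture exactly as open as the paper does.
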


The main difficulty in conjecture \ref{cj:cirlaw} lies in the fact that $\bM$
is non--normal with non i.i.d.\ entries. The limiting spectral distributions of
non--normal random matrices is a notoriously difficult subject, see for
instance \cite{tao-vu-krishnapur-circular}. The method used for the singular
values for the proof of theorem \ref{th:sym} fails for the eigenvalues, due to
the lack of variational formulas for the eigenvalues. In contrast to singular
values, the eigenvalues of non--normal matrices are very sensitive to
perturbations, a phenomenon captured by the notion of pseudo--spectrum
\cite{MR2155029}. The reader may find in \cite{chafai-cir} a more general
version of theorem \ref{th:MP} which goes beyond the exponential case, and
some partial answers to conjecture \ref{cj:cirlaw}.

\subsubsection*{Sub--dominant eigenvalue}

The fact that non--centered entries produce an explosive extremal eigenvalue
was already noticed in various situations, see for instance \cite{MR1062321},
\cite{MR1284550}, \cite[th. 1.4]{MR2206341}, \cite{MR2284045}, and
\cite{djalil-nccl}. It is natural to ask about the asymptotic behavior
(convergence and fluctuations) of the sub--dominant eigenvalue
$\lambda_2(\bM)$ when $\bM\sim\cU(\cM_n)$. The reader may find some answers in
\cite{MR1972678,MR1800967}, and may forge new conjectures from our simulations
(see figures \ref{fi:subdom1} and \ref{fi:subdom2}). For instance, by analogy
with the Complex Ginibre Ensemble \cite{MR1148410,MR1986426}, one can state
the following:

\begin{conj}[Behavior of sub--dominant eigenvalue and spectral gap]
  If $\bM$ is as in theorem \ref{th:sym}, then $\lambda_1(\bM)=1$ while
  $$
  \dP\PAR{\lim_{n\to\infty}\sqrt{n}\,\ABS{\lambda_2(\bM)}=1}=1.
  $$
  In particular, the spectral gap $1-|\lambda_2(\bM)|$ of $\bM$ is of order
  $1-1/\sqrt{n}$ for large $n$. Moreover, there exist deterministic sequences
  $(a_n)$ and $(b_n)$ and a probability distribution $\cG$ on $\dR$ such that
  $$
  b_n(\ABS{\lambda_2(\bM)}-a_n)
  \overset{\text{d}}{\underset{n\to\infty}{\longrightarrow}}
  \cG
  $$
  where $\overset{\text{d}}{\to}$ denotes the convergence in law.
\end{conj}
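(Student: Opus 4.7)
The strategy I would pursue is to reduce the claim to the spectral-radius behaviour of a non-Hermitian matrix with i.i.d.\ centered entries of variance $1/n$, for which the complex Ginibre analysis of Rider \cite{MR1986426} gives both almost-sure convergence to $1$ and a Gumbel-type fluctuation at the edge.

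First I would decompose $\sqrt{n}\,\bM$ into a Perron part and an essentially i.i.d.\ bulk. With the notation of theorem \ref{th:sym}, write $S_i=\sum_{k=1}^n X_{i,k}$, $\bar X_i=S_i/n$, and $\bX^{\circ}_{ij}=X_{ij}-1$. Expanding $\sqrt n\,\bM=n^{-1/2}\mathrm{diag}(1/\bar X_i)(\bJ+\bX^{\circ})$ via $1/\bar X_i=1-(\bar X_i-1)+O((\bar X_i-1)^2)$ yields
\[
\sqrt n\,\bM
=\frac{1}{\sqrt n}\bJ+\frac{1}{\sqrt n}\bX^{\circ}
-\frac{1}{\sqrt n}(\bar X_i-1)_{i}\mathbf{1}^{\top}
-\frac{1}{\sqrt n}\mathrm{diag}(\bar X_i-1)\,\bX^{\circ}+\cE_n,
\]
where $\cE_n$ gathers higher-order terms and can itself be split into a rank-one piece plus a negligible remainder. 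The two rank-one summands $n^{-1/2}\bJ$ and $-n^{-1/2}(\bar X_i-1)_i\mathbf{1}^{\top}$ only contribute to the Perron eigenvalue $\lambda_1(\sqrt n\,\bM)=\sqrt n$; the bulk is governed by $\bY\defeq n^{-1/2}\bX^{\circ}$, a matrix with i.i.d.\ centered entries of variance $1/n$. For $\bY$, Rider proves in the Gaussian case that $\rho(\bY)\to1$ almost surely and that $\sqrt{4n\gamma_n}(\rho(\bY)-1-\sqrt{\gamma_n/(4n)})$ converges in law to a standard Gumbel, with $\gamma_n=\log(n/(2\pi))-2\log\log n$. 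Granting universality of this edge behaviour and the approximation $|\lambda_2(\bM)|\approx\rho(\bY)/\sqrt n$, one reads off the candidate sequences $a_n=n^{-1/2}+\sqrt{\gamma_n}/(2n)$, $b_n=2n\sqrt{\gamma_n}$, and $\cG=\mathrm{Gumbel}$.

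The main obstacle is transferring the spectral-radius result from $\bY$ to $\sqrt n\,\bM$. Outlier-stability arguments of the type found in \cite{tao-vu-krishnapur-circular} should handle the rank-one summands, since their non-zero eigenvalues lie at distance $\sqrt n$ from the unit disc. However, the multiplicative perturbation $n^{-1/2}\mathrm{diag}(\bar X_i-1)\bX^{\circ}$ has operator norm only $O(n^{-1/2})$, which is much larger than the Gumbel window $(n\log n)^{-1/2}$ in which $\rho(\bY)$ fluctuates; the naive bound $|\lambda_k(A+E)-\lambda_k(A)|\leq\|E\|_{\mathrm{op}}$ is thus useless for the fluctuation statement. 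One is forced to a Girko-style logarithmic-potential argument requiring uniform lower bounds on the smallest singular value of $\sqrt n\,\bM-z\bI$ for $z$ on a fine grid near the unit circle. Moreover, $\bar X_i-1$ and $\bX^{\circ}$ are correlated within each row, so off-the-shelf i.i.d.\ theorems cannot be applied directly: one needs a version adapted to the Dirichlet row structure, in the spirit of \cite{chafai-cir}. Finally, the almost-sure statement $\sqrt n\,|\lambda_2(\bM)|\to1$ demands an exponential concentration estimate for the non-Hermitian spectral radius, which is itself a delicate matter and, in my view, the point where a complete proof of the conjecture will stand or fall.
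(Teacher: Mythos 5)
You should first be aware that the statement you set out to prove is presented in the paper as a \emph{conjecture}, with no proof at all: the author supports it only by numerical simulations (figures \ref{fi:subdom1} and \ref{fi:subdom2}) and by an analogy with the Complex Ginibre Ensemble, and explicitly cautions that there is no clear indication that $\cG$ is a Gumbel distribution, precisely because the matrices are real and the sub--dominant eigenvalue is observed to be real with positive probability. So there is no hidden argument in the paper to compare yours against; the only question is whether your proposal closes the gap, and --- as you candidly admit in your final paragraph --- it does not. It is a heuristic roadmap whose decisive steps are all left open, not a proof.

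Concretely: (i) your claim that the two rank--one summands ``only contribute to the Perron eigenvalue'' does not follow from any general principle, because $\sqrt{n}\,\bM$ is non--normal and an additive perturbation of rank one but operator norm of order $\sqrt{n}$ can displace the entire spectrum (this is the pseudo--spectrum sensitivity discussed in \cite{MR2155029}); the outlier--stability machinery of \cite{tao-vu-krishnapur-circular} would require, at the very least, circular--law control and lower bounds on the least singular value of $\sqrt{n}\,\bM-z\bI$ for the correlated Dirichlet rows, i.e.\ essentially the content of conjecture \ref{cj:cirlaw}, which the paper also leaves open. (ii) Rider's theorem \cite{MR1986426} concerns the complex Ginibre ensemble, whereas your entries are real, non--Gaussian, and correlated within each row; you simply ``grant universality'' of the edge fluctuation, which is not a citable lemma, and the real--matrix edge (with its real eigenvalues) makes even your candidate $a_n$, $b_n$ and $\cG=\mathrm{Gumbel}$ doubtful --- the paper itself warns against exactly this identification. (iii) The obstruction you correctly identify --- the multiplicative error $n^{-1/2}\mathrm{diag}(\OL{X}_i-1)\,\bX^{\circ}$ has operator norm of order $\sqrt{\log n/n}$, far larger than the fluctuation window of order $1/\sqrt{n\log n}$ --- is genuine, and you propose no mechanism to overcome it beyond saying that a Girko--type least--singular--value argument ``would be needed''; the same applies to the almost--sure statement $\dP\PAR{\lim_{n\to\infty}\sqrt{n}\,\ABS{\lambda_2(\bM)}=1}=1$, for which your sketch does not even yield convergence in probability. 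In short, your decomposition and your list of difficulties form a sensible research programme, consistent with the direction of \cite{chafai-cir}, but none of the essential steps is carried out, so the statement remains exactly as open as the paper leaves it.
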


There is not clear indication that $\cG$ is a Gumbel distribution as for the
Complex Ginibre Ensemble. Moreover, our simulations suggest that the
sub--dominant eigenvalue is real with positive probability (depends on $n$),
which is not surprising knowing \cite{MR1437734,MR1231689}. Note that Goldberg
and Neumann have shown \cite{MR1972678} that if $\bX$ is an $n\times n$ random
matrix with i.i.d.\ rows such that for every $1\leq i,j,j'\leq n$,
$$
\dE[\bX_{i,j}]=\frac{1}{n}, %
\quad\text{and}\quad %
\mathrm{Var}(\bX_{i,j})=O\PAR{\frac{1}{n^2}}, %
\quad\text{and}\quad %
\ABS{\mathrm{Cov}(\bX_{i,j},\bX_{i,j'})}=O\PAR{\frac{1}{n^3}}
$$
then $\dP(\ABS{\lambda_2(\bX)}\leq r)\geq p$ for any $p\in(0,1)$, any $0<r<1$,
and large enough $n$. This is the case if we set $\bX=\bM$.

\subsubsection*{Other distributions}

The Dirichlet distribution of dimension $n$ and mean
$(\frac{1}{n},\ldots,\frac{1}{n})$ is the uniform distribution on the simplex
$\Lambda_n$ defined by \eqref{eq:simplex}. One can replace the uniform
distribution by a Dirichlet distribution of dimension $n$ and arbitrary mean.
The argument used in the proof of theorem \ref{th:sym} remains the same due to
the very similar construction of Dirichlet distributions by projection from
i.i.d.\ Gamma random variables. One can also replace the $\NRM{\cdot}_1$-norm
by any other $\NRM{\cdot}_p$-norm, and investigate the limiting spectral
distribution of the corresponding random matrices. This case can be handled
with the construction of the uniform distribution by projection proposed in
\cite{schechtman-zinn}. Replacing the non--negative portion of spheres by the
non--negative portion of balls is also possible by using \cite{barthe-al}.
More generally, one can consider random matrices with independent rows. The
case of the uniform distribution on the whole unit $\NRM{\cdot}_p$--ball of
$\dR^n$ is considered for instance by in \cite{MR2280648} by using
\cite{barthe-al} together with random matrices results for i.i.d.\ centered
entries. It is crucial here to have an explicit construction of the
distribution from an i.i.d.\ array. For the link with the sampling of convex
bodies, see \cite{MR2276637}. The case of matrices with i.i.d.\ rows following
a log-concave isotropic distribution is considered in the recent work
\cite{pajor-pastur}, by using recently developed results on log-concave
measures. The reader may find a universal version of theorem \ref{th:MP} in
\cite{chafai-cir}, where the exponential law is replaced by an arbitrary law.

\subsubsection*{Doubly Stochastic matrices}

The Birkhoff or transportation polytope is the set of $n\times n$ doubly
stochastic matrices, i.e.\ matrices which are Markov and have a Markov
transpose. Each $n\times n$ doubly stochastic matrix corresponds to a
transportation map of $n$ unit masses into $n$ boxes of unit mass (matching),
and conversely, each transportation map of this kind is a $n\times n$ doubly
stochastic matrix. Geometrically, the Birkhoff polytope is a convex compact
subset of $\cM_n$ of zero Lebesgue measure in $\dR^{n^2}$ and $(n-1)^2$
degrees of freedom if $n>1$. As for $\cM_n$, one can define the uniform
distribution as the normalized trace of the Lebesgue measure. However, we
ignore if this distribution has a probabilistic representation that allows
exact simulation as for $\cU(\cM_n)$. The spectral behavior of random doubly
stochastic matrices was considered in the Physics literature, see for instance
\cite{MR1840293}. On the purely discrete side, the Birkhoff polytope is also
related to magic squares, transportation polytopes and contingency tables, see
\cite{MR901121,MR803747} and \cite{MR1380519}. Note also that if $\bM$ is
Markov, then $\bM\bM^\top$ and $\frac{1}{2}(\bM+\bM^\top)$ are not Markov in
general. However, this is the case when $\bM$ is doubly stochastic. The
Birkhoff-von Neumann theorem states that the extremal points of the Birkhoff
polytope are exactly the permutation matrices. The reader may find nice
spectral results on random uniform permutation matrices in
\cite{MR1794543,MR1813834} and references therein.

Another interesting polytope of matrices is the set of symmetric $n\times n$
Markov matrices, which is a convex compact polytope of zero Lebesgue measure
in $\dR^{n^2}$ with $\frac{1}{2}n(n-1)$ degrees of freedom if $n>1$. As for
$\cM_n$, one can define the uniform distribution as the normalized trace of
the Lebesgue measure. However, we ignore if this distribution has a
probabilistic representation that allows simulation as for $\cU(\cM_n)$. One
can ask about the spectral properties of the corresponding random symmetric
Markov matrices. Note that these matrices are doubly stochastic, but the
converse is false except when $n=1$ or $n=2$. 
Our construction of $\cU(\cM_n)$ in theorem \ref{th:sym} corresponds
in the Markovian probabilistic jargon to a random conductance model
on the complete oriented graph. The study of the spectral
properties of random reversible Markov conductance models on the complete non--oriented graph can be found in
\cite{chafai-stoc,bordenave-caputo-chafai,bordenave-caputo-chafai-bis}.
For other graphs, the reader may find some clues in \cite{MR2166276}.

Let $\bM$ be as in theorem \ref{th:sym}. Numerical simulations suggest that
almost surely, the ESD of the symmetric matrix $\frac{1}{2}(\bM+\bM^\top)$
tends, as $n\to\infty$, to a semi--circle Wigner distribution.

If $\bU$ is an $n\times n$ unitary matrix, then $(\ABS{\bU_{i,j}}^2)_{1\leq
  i,j\leq n}$ is a doubly stochastic matrix. These doubly stochastic matrices
are called \emph{uni--stochastic} or \emph{unitary-stochastic}. There exists
doubly stochastic matrices which are not uni--stochastic, see \cite{MR2172684}
and \cite{MR1876609}. However, every permutation matrix is orthogonal and thus
uni--stochastic. The Haar measure on the unitary group induces a probability
distribution on the set of uni--stochastic matrices. How about the asymptotic
spectral properties of the corresponding random matrices?

\subsubsection*{Perron--Frobenius eigenvector (invariant vector)}

If $\bM\sim\cU(\cM_n)$, then almost surely, all the entries of $\bM$ are
non-zero, and in particular, $\bM$ is almost surely recurrent irreducible and
aperiodic. By a theorem of Perron and Frobenius \cite{MR2209438}, it follows
that almost surely, the eigenspace of $\bM^\top$ associated to the eigenvalue
$1$ is of dimension $1$ and contains a unique vector with non--negative entries
and unit $\NRM{\cdot}_1$-norm. One can ask about the asymptotic behavior of
this vector as $n\to\infty$. For a fixed $n$, the distribution of this vector
is the distribution of the rows of the infinite product of random matrices
$\lim_{k\to\infty}\,\bM^k$.

\section{Structure of the Dirichlet Markov Ensemble}
\label{se:DME}

Let $\Lambda_n$ be as in \eqref{eq:simplex}. For any $a\in(0,\infty)^n$, the
Dirichlet distribution $\cD_n(a_1,\ldots,a_n)$, supported by $\Lambda_n$, is
defined as the distribution of
$$ 
\frac{1}{\NRM{G}_1}G %
=\PAR{\frac{G_1}{G_1+\cdots+G_n},\ldots,\frac{G_n}{G_1+\cdots+G_n}}
$$ where $G$ is a random vector of $\dR^n$ with independent entries with
$G_i\sim\mathrm{Gamma}(1,a_i)$ for every $1\leq i\leq n$. Here
$\mathrm{Gamma}(\lambda,a)$ has density
$$
t\mapsto \frac{\lambda^a}{\Gamma(a)} t^{a-1}e^{-\lambda t}\,\rI_{(0,\infty)}(t),
$$
where $\Gamma(a)=\int_0^\infty\!t^{a-1}e^{-t}\,dt$ is the Euler Gamma function.
Let $P\sim\cD_n(a_1,\ldots,a_n)$. For every partition $I_1,\ldots,I_k$ of
$\{1,\ldots,n\}$ into $k$ non empty subsets, we have
$$
\PAR{\sum_{i\in I_1}P_i,\ldots,\sum_{i\in I_k} P_i}
\sim\cD_k\PAR{\sum_{i\in I_1}a_i,\ldots,\sum_{i\in I_k}a_i}.
$$
The mean and covariance matrix of $\cD_n(a_1,\ldots,a_n)$ are given by
$$
\frac{1}{\NRM{a}_1}a %
\quad\text{and}\quad %
\frac{1}{\NRM{a}_1^2(1+\NRM{a}_1)} (\NRM{a}_1\mathrm{diag}(a)-aa^\top)
$$
where $a=(a_1,\ldots,a_n)^\top$ and $\mathrm{diag}(a)$ is the diagonal matrix
with diagonal given by $a$. For any non-empty subset $I$ of $\{1,\ldots,n\}$,
we have
$$
\sum_{i\in I} P_i %
\sim \mathrm{Beta}\PAR{\sum_{i\in I}a_i,\sum_{i\not\in I}a_i},
$$
where $\mathrm{Beta}(\alpha,\beta)$ denotes the Euler Beta distribution on $[0,1]$
of Lebesgue density
$$
t\mapsto \frac{\Gamma(\alpha+\beta)}{\Gamma(\alpha)\Gamma(\beta)} 
t^{\alpha-1}(1-t)^{\beta-1}\,\rI_{[0,1]}(t).
$$
If $P_I=(P_i)_{i\in I}$, $P_{I^c}=(P_i)_{i\not\in I}$, $a_I=(a_i)_{i\in I}$,
and $\ABS{I}=\mathrm{card}(I)$, then
$$
\frac{1}{\sum_{i\in I}P_i}P_I \text{ and } P_{I^c} \text{ are independent and
} \frac{1}{\sum_{i\in I}P_i}P_I\sim\cD_{\ABS{I}}(a_I),
$$
For any $\alpha>0$, the Dirichlet distribution $\cD_n(\alpha,\ldots,\alpha)$
is exchangeable, with negatively correlated components. More generally, if
$P\sim\mu$ where $\mu$ is an exchangeable probability distribution on the
simplex $\Lambda_n$ with $n>1$, then
$$
0=\mathrm{Var}(1)=\mathrm{Var}(P_1+\cdots+P_n)%
=n\mathrm{Var}(P_1)+n(n-1)\mathrm{Cov}(P_1,P_2).
$$
Consequently, $\mathrm{Cov}(P_1,P_2)=-(n-1)^{-1}\mathrm{Var}(P_1)$ and in
particular $\mathrm{Cov}(P_1,P_2)\leq0$. 

We refer for instance to \cite{MR0144404} for other properties of Dirichlet
distributions. Corollary \ref{co:DME} follows immediately from theorem
\ref{th:DME} together with the basic properties of the Dirichlet distributions
mentioned above.

\begin{proof}[Proof of theorem \ref{th:DME}]
  As a subset of $\dR^n$, the simplex $\Lambda_n$ defined by
  \eqref{eq:simplex} is of zero Lebesgue measure. However, by considering
  $\Lambda_n$ as a convex subset of the hyper-plane of equation
  $x_1+\cdots+x_n=1$ or by using the general notion of Hausdorff measure, one
  can see that in fact, the Dirichlet distribution $\cD_n(1,\ldots,1)$ is the
  normalized trace of the Lebesgue measure of $\dR^n$ on the simplex
  $\Lambda_n$. In other words, $\cD_n(1,\ldots,1)$ can be seen as the uniform
  distribution on $\Lambda_n$, see \cite{schechtman-zinn}.

  We identify $\cM_n$ with
  $(\Lambda_n)^n=\Lambda_n\times\cdots\times\Lambda_n$ where $\Lambda_n$ is
  repeated $n$ times. The trace of the Lebesgue measure of
  $\dR^{n^2}=(\dR^n)^n$ on $(\Lambda_n)^n$ is the $n$-tensor product of the
  trace of the Lebesgue measure of $\dR^n$ on $\Lambda_n$, i.e.\ the
  $n$-tensor product measure $\cD_n(1,\ldots,1)^{\otimes n}$. Consequently,
  for every positive integer $n$,
  $$
  (\cM_n,\cU(\cM_n))=((\Lambda_n)^n,\cD_n(1,\ldots,1)^{\otimes n}).
  $$
  This gives the invariance of $\cU(\cM_n)$ by permutation of rows. If
  $\bM\sim\cU(\cM_n)$, then the rows of $\bM$ are i.i.d.\ and follow the
  Dirichlet distribution $\cD_n(1,\ldots,1)$. Finally, the invariance of
  $\cU(\cM_n)$ by permutation of columns comes from the exchangeability of the
  Dirichlet distribution $\cD_n(1,\ldots,1)$.
\end{proof}

\subsubsection*{Recursive simulation}

The simulation of $\cU(\cM_n)$ follows from the simulation of $n$ i.i.d.\
realizations of $\cD_n(1,\ldots,1)$ by using $n^2$ i.i.d.\ exponential random
variables. The elements of Dyson's classical Gaussian ensembles GUE and GOE
can be simulated recursively by adding a new independent line/column. It is
natural to ask about a recursive method for the Dirichlet Markov Ensemble. If
$$
X\sim\cD_{n-1}(a_2,\ldots,a_n)%
\quad\text{and}\quad %
Y\sim\mathrm{Beta}(a_1,a_2+\cdots+a_n)
$$
are independent, then
$$
(Y,(1-Y)X)\sim\cD_n(a_1,\ldots,a_n).
$$
This recursive simulation of Dirichlet distributions is known as the
\emph{stick--breaking} algorithm \cite{MR1309433}. It allows to simulate
$\cU(\cM_n)$ recursively on $n$. Namely, if $\bM$ is such that
$\bM\sim\cU(\cM_n)$, then
$$
\left(
\begin{array}{cc}
Y &  (1-Y)\cdot \bM \\
Z_1 & Z_2\ \cdots\ Z_n 
\end{array}
\right)\sim\cU(\cM_{n+1})
$$
where $Z$ is a random row vector of $\dR^{n+1}$ with
$Z\sim\cD_{n+1}(1,\ldots,1)$ and $Y$ is a random column vector of $\dR^n$ with
i.i.d.\ entries of law $\mathrm{Beta}(1,n)$, with $\bM,Y,Z$ independent. Here
$((1-Y)\cdot\bM)_{i,j}:=(1-Y)_i\bM_{i,j}$ for every $1\leq i,j\leq n$.

\subsubsection*{Asymptotic behavior of the rows}

Let $\bM$ and $(X_{i,j})_{1\leq i,j<\infty}$ be as in theorem \ref{th:sym}.
Let us fix $k\geq1$ and $n\geq i\geq1$. The $k^\text{th}$ moment $m_{n,i,k}$
of the discrete probability distribution
$\frac{1}{n}\sum_{j=1}^n\delta_{n\bM_{i,j}}$ is given by
\begin{align*}
m_{n,i,k}&=\frac{1}{n}\sum_{j=1}^n \PAR{n\bM_{i,j}}^k \\
&=\sum_{j=1}^n \frac{n^k}{n}\frac{X_{i,j}^k}{\PAR{X_{i,1}+\cdots+X_{i,n}}^k} \\
&= \frac{n^k}{\PAR{X_{i,1}+\cdots+X_{i,n}}^k}\frac{X_{i,1}^k+\cdots+X_{i,n}^k}{n}.
\end{align*}
Therefore, by using twice the strong law of large numbers, we get that almost
surely,
$$
\lim_{n\to\infty} m_{n,i,k} = \frac{\dE[X_{1,1}^k]}{\dE[X_{1,1}]^k}=\dE[X_{1,1}^k].
$$
As a consequence, almost surely, for any fixed $i\geq1$ and every $k\geq1$,
$$
\lim_{n\to\infty}W_k\PAR{\frac{1}{n}\sum_{j=1}^n\delta_{n\bM_{i,j}}\,;\,\cE_1}=0,
$$
where $\cE_1=\cL(X_{1,1})$ is the exponential law on unit mean and where
$W_k(\cdot\,;\,\cdot)$ is the so called Wasserstein--Mallows coupling distance
of order $k$ (see for instance \cite{MR1964483} or \cite{MR1105086}). This
result is a special case of a more general well known phenomenon (sometimes
referred as the Poincar\'e observation) concerning the coordinates of a
uniformly distributed random point on the unit $\NRM{\cdot}_p$--sphere of
$\dR^n$ with $1\leq p<\infty$ when $n\to\infty$, see for instance
\cite{MR1962135}, \cite{MR2480790}, and references therein.

\subsubsection*{Semi--group structure and translation invariance}

The set $\cM_n$ is a semi--group for the usual matrix product. In particular,
for every $\bT\in\cM_n$, the set $\cM_n$ is stable by the left translation
$\bM\mapsto \bT\bM$ and the right translation $\bM\mapsto \bM\bT$. When $\bT$
is a permutation matrix, then these translations are bijective maps, and the
left translation (respectively right) translation corresponds to rows
(respectively columns) permutations.

For some fixed $\bT\in\cM_n$, let us consider the left translation $\bM\mapsto
\bT\bM$, where $\bM\sim\cU(\cM_n)$. By linearity, we have
$$
\dE[\bT\bM] %
= \bT\dE[\bM] %
= \bT\frac{1}{n}\mathbf{1} %
= \frac{1}{n}\mathbf{1}
$$
where $\mathbf{1}$ is the $n\times n$ matrix full of ones. Thus, the left
translation by $\bT$ leaves the mean invariant.

\begin{proof}[Proof of theorem \ref{th:perm-trans}]
  First of all, the case $n=1$ is trivial and one can assume that $n>1$ in the
  rest of the proof. A probability distribution $\mu$ on $\cM_n$ is invariant
  by the left translation $\bM\mapsto \bP\bM$ for every permutation matrix
  $\bP$ of size $n\times n$ if and only if $\mu$ is row exchangeable.
  Similarly, $\mu$ is invariant by the right translation $\bM\mapsto \bM\bP$
  for every permutation matrix $\bP$ of size $n\times n$ if and only if $\mu$
  is column exchangeable. Theorem \ref{th:DME} gives then the invariance of
  $\cU(\cM_n)$ by left and right translations with respect to permutation
  matrices\footnote{However that as a law over $\dR^{n^2}$, $\cU(\cM_n)$ is
    not exchangeable. The permutation of rows and columns correspond to a
    proper subset of the group of permutations of the $n^2$ entries.}.

  Conversely, let us assume that the law $\cU(\cM_n)$ is invariant by the left
  translation $\bM\mapsto \bT\bM$ for some $\bT\in\cM_n$. If
  $\bM\sim\cU(\cM_n)$, and since the components of the first column
  $\bM_{\cdot,1}$ of $\bM$ are i.i.d.\ we have
  \begin{align*}
    \mathrm{Var}((\bT\bM)_{1,1})
    &=\mathrm{Var}\PAR{\sum_{k=1}^n\bT_{1,k}\bM_{k,1}} \\
    &=\sum_{k=1}^n(\bT_{1,k})^2\mathrm{Var}\PAR{\bM_{k,1}} \\
    &=\mathrm{Var}(\bM_{1,1})\sum_{k=1}^n(\bT_{1,k})^2.
  \end{align*}
  The invariance hypothesis implies in particular that
  $\mathrm{Var}(\bM_{1,1})=\mathrm{Var}((\bT\bM)_{1,1})$. Since
  $\mathrm{Var}(\bM_{1,1})=(n-1)/(n^2(n+1))>0$, we get
  $1=\sum_{k=1}^n(\bT_{1,k})^2$. Now, $\bT$ is Markov and thus
  $\sum_{k=1}^n\bT_{1,k}=1$, which gives
  $$
  \sum_{k=1}^n(\bT_{1,k}-(\bT_{1,k})^2)=0.
  $$
  Since $\bT$ is Markov, its entries are in $[0,1]$ and hence
  $\bT_{1,k}\in\{0,1\}$ for every $1\leq k\leq n$. The condition $\sum_{k=1}^n
  \bT_{1,k}=1$ gives then that the first line of $\bT$ is an element of the
  canonical basis of $\dR^n$. The same argument used for $(\bT\bM)_{k,1}$ for
  every $1\leq k\leq n$ shows that every line of $\bT$ is an element of the
  canonical basis, and thus $\bT$ is a binary matrix with exactly a unique $1$
  on each row. Since $\bT\bM\sim\cU(\cM_n)$, it has independent rows, and thus
  the position of the $1$'s on the rows of $\bT$ are pairwise different, which
  means that $\bT$ is a permutation matrix as expected.

  Let us consider now the case where the law $\cU(\cM_n)$ is invariant by the
  right translation $\bM\mapsto \bM\bT$ for some $\bT\in\cM_n$. If
  $\bM\sim\cU(\cM_n)$, we can first take a look at the mean. Namely,
  $\dE[\bM\bT]=\dE[\bM]\bT=\frac{1}{n}\,\bS$ where $\bS$ is defined by
  $$
  \bS_{i,j}=\sum_{k=1}^n\bT_{k,j}
  $$
  for every $1\leq i,j\leq n$. Now, the invariance hypothesis gives on the
  other hand 
  $$
  \dE[\bM\bT]=\dE[\bM]=\frac{1}{n}\mathbf{1}
  $$
  and thus $\bS=\mathbf{1}$, which means that $\bT$ is doubly stochastic, i.e.\ 
  both $\bT$ and $\bT^\top$ are Markov. The invariance hypothesis implies also
  that
  $$
  \mathrm{Var}((\bM\bT)_{1,1})=\mathrm{Var}(\bM_{1,1})=\frac{n-1}{n^2(n+1)}.
  $$
  But since the first line $\bM_{1,\cdot}$ of $\bM$ is $\cD_n(1,\ldots,1)$
  distributed,
  \begin{align*}
    \mathrm{Var}((\bM\bT)_{1,1})
    &=\sum_{1\leq i,j\leq n}\bT_{i,1}\bT_{j,1}\mathrm{Cov}(\bM_{1,i};\bM_{1,j}) \\
    &=\frac{n-1}{n^2(1+n)}\sum_{i=1}^n(\bT_{i,1})^2-\frac{2}{n^2(n+1)}\sum_{1\leq
      i<j\leq n}\bT_{i,1}\bT_{j,1}.
  \end{align*}
  Since $\bT$ is doubly stochastic, we have $1=\sum_{i=1}^n\bT_{i,1}$ and thus
  $$
  (n-1)\sum_{i=1}^n(\bT_{i,1}-(\bT_{i,1})^2)
  =-2\sum_{1\leq i<j\leq n}\bT_{i,1}\bT_{j,1}.
  $$
  The terms of the left and right hand side have opposite signs, which gives
  that $\bT_{i,1}\in\{0,1\}$ for every $1\leq i\leq n$. The same method used
  for $(\bM\bT)_{1,k}$ for every $1\leq k\leq n$ shows that $\bT$ is a binary
  matrix. Since $\bT$ is doubly stochastic, it follows that $\bT$ is actually
  a permutation matrix, as expected.
\end{proof}

The set of $n\times n$ permutation matrices is a discrete subgroup of the
orthogonal group of $\dR^n$, isomorphic to the symmetric group $\Sigma_n$. The
group of permutation matrices plays for the Dirichlet Markov Ensemble the role
played by the orthogonal group for Dyson's GOE or COE, and the role played by
the unitary group for Dyson's GUE or CUE. In some sense, we replaced an $L^2$
Gaussian structure by an $L^1$ Dirichlet structure while maintaining the
permutation invariance.

A very natural question is to ask about the existence of a convolution
idempotent probability distribution on the compact semi--group $\cM_n$. Recall
that a probability distribution $\mu$ on a semi--group $\mathfrak{S}$ is
idempotent if and only if $\mu*\mu=\mu$. Here the convolution $\mu*\nu$ of two
probability distributions $\mu$ and $\nu$ on $\mathfrak{S}$ is defined, for
every bounded continuous $f:\mathfrak{S}\to\dR$, by
$$
\int_{\mathfrak{S}}\!f(s)d(\mu*\nu)(s) %
=
\int_{\mathfrak{S}}\!\PAR{\int_{\mathfrak{S}}\!f(s_ls_r)\,d\mu(s_l)}\,d\nu(s_r).
$$
Actually, the structure of compact semi--groups and their idempotent measures
was deeply investigated in the 1960's, see \cite[p. 158-160]{MR0329037} for a
historical account. In particular, one can find in \cite[lem. 3]{MR0329037}
the following result.

\begin{lem}\label{le:rosenblatt}
  Let $\mu$ be a regular probability distribution over a compact Hausdorff
  semi--group $\mathfrak{S}$ such that the support of $\mu$ generates
  $\mathfrak{S}$. Then the mass of the convolution sequence $\mu^{*n}$
  concentrates on the kernel $K(\mathfrak{S})$ of $\mathfrak{S}$. More
  precisely, for every open set $O$ containing $K$ and every $\varepsilon>0$,
  there exists a positive integer $n_\varepsilon$ such that $\mu^{*n}(O)>1-\varepsilon$
  for every $n\geq n_\varepsilon$.
\end{lem}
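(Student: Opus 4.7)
The plan is to proceed in three stages: (i) extract an idempotent weak limit via Ces\`aro averaging, (ii) identify it with a Haar--type measure on the kernel via the classical structure theorem for idempotent measures on compact semi--groups, and (iii) upgrade the resulting Ces\`aro concentration to pointwise concentration by exploiting the absorbing property of the kernel.

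For stage (i), the space of regular probability measures on the compact Hausdorff semi--group $\mathfrak{S}$ is weakly compact (by Riesz representation and Banach--Alaoglu), so the Ces\`aro averages $\nu_N=\frac{1}{N}\sum_{n=1}^N \mu^{*n}$ admit a weakly convergent subsequence $\nu_{N_k}\to\nu$. The telescoping identity $\mu*\nu_N-\nu_N=\frac{1}{N}(\mu^{*(N+1)}-\mu)$ passes to the weak limit to give $\mu*\nu=\nu$; iterating yields $\mu^{*n}*\nu=\nu$ for every $n$, and averaging and passing to the limit once more gives $\nu*\nu=\nu$. Hence $\nu$ is a probability idempotent on $\mathfrak{S}$.

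For stage (ii), invoke the classical structure theorem for idempotent probabilities on compact Hausdorff semi--groups (Kawada--It\^o, Rosenblatt, Pym, Heyer; see \cite{MR0329037}): the support of any such idempotent is a completely simple sub--semi--group, and the measure is Haar on each of its maximal--group components. Since $\mathrm{supp}(\mu)$ generates $\mathfrak{S}$ and $\mu*\nu=\nu$, the closed set $\mathrm{supp}(\nu)$ is stable under left multiplication by $\mathrm{supp}(\mu)$, hence by the closed sub--semi--group it generates, which is all of $\mathfrak{S}$; symmetrically it is stable under right multiplication, so it is a closed two--sided ideal. By minimality of $K(\mathfrak{S})$ this forces $\mathrm{supp}(\nu)=K(\mathfrak{S})$, which yields uniqueness of the idempotent and hence genuine weak convergence $\nu_N\to\nu$ (not merely along subsequences).

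For stage (iii), fix an open set $O\supset K$. Then $O^c$ is compact and disjoint from $K$, so $\nu(O^c)=0$ and Portmanteau applied to $\nu_N\to\nu$ yields the Ces\`aro statement $\nu_N(O^c)\to 0$. To promote this to pointwise convergence, use that $K$ is a two--sided ideal and that the multiplication is continuous: compactness of $K$ allows the construction of an open intermediate set $A$ with $K\subset A\subset O$ and $\mathfrak{S}\,A\subset O$, giving the key comparison $\mu^{*(n+1)}(O)\geq \mu^{*n}(A)$. Arguing by contradiction, if $\mu^{*n_k}(O^c)\geq \delta>0$ along some subsequence, weak compactness yields a further weak limit $\rho$ with $\rho(O^c)\geq \delta$ by Portmanteau for the closed set $O^c$; iterating the construction of absorbing open neighborhoods of $K$ and combining with the identity $\rho*\nu=\nu$ and the established Ces\`aro convergence forces $\mathrm{supp}(\rho)\subset K$, a contradiction. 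Therefore $\mu^{*n}(O)\to 1$. The main obstacle is precisely this final upgrade: Ces\`aro convergence of bounded sequences is strictly weaker than pointwise convergence, so ruling out persistent mass in $O^c$ genuinely requires the semi--group structure --- the fact that $K$ absorbs the dynamics ($\mathfrak{S}\cdot K\subset K$) and that open neighborhoods of $K$ are attractive under the action of $\mathfrak{S}$.
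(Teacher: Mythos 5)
The paper does not actually prove this lemma: it is quoted verbatim from Rosenblatt \cite[lem.~3]{MR0329037}, so your argument has to stand on its own. Stages (i) and (ii) are essentially sound, but contain two slips worth fixing. Minimality of $K(\mathfrak{S})$ only gives $K(\mathfrak{S})\subseteq\mathrm{supp}(\nu)$, which is the direction you do not need; the inclusion $\mathrm{supp}(\nu)\subseteq K(\mathfrak{S})$ that the rest of the proof uses comes from the structure theorem you cite (the support of an idempotent is a closed completely simple sub--semi--group, and a completely simple closed two--sided ideal must coincide with the kernel). Also, ``uniqueness of the idempotent'' is false in general --- on a left--zero semi--group ($xy=x$ for all $x,y$) every probability measure is idempotent --- what is true, and suffices, is uniqueness of the subsequential Ces\`aro limits, obtained from the two--sided invariances $\mu*\nu=\nu=\nu*\mu$ via the standard computation $\nu*\nu'=\nu'$ and $\nu*\nu'=\nu$.

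The genuine gap is stage (iii). The identity $\rho*\nu=\nu$ carries no information about $\mathrm{supp}(\rho)$: since $K(\mathfrak{S})$ is a two--sided ideal and $\mathrm{supp}(\nu)=K(\mathfrak{S})$, one has $\mathrm{supp}(\sigma*\nu)\subseteq K(\mathfrak{S})$ for \emph{every} probability $\sigma$, and for instance $\delta_e*\nu=\nu$ whenever $\mathfrak{S}$ has an identity $e\notin K(\mathfrak{S})$; so this identity cannot force $\mathrm{supp}(\rho)\subseteq K(\mathfrak{S})$. Likewise, iterating one--sided absorbing sets $\mathfrak{S}A_{j+1}\subseteq A_j$ only yields $\mu^{*(n+j)}(O)\geq\mu^{*n}(A_j)$, and Ces\`aro convergence alone permits arbitrarily long runs of bad times while the window length you would need grows with $j$; note that the claim ``every weak limit point of $(\mu^{*n})$ is supported in $K(\mathfrak{S})$'' is equivalent to the lemma itself, so the sketch is dangerously close to circular. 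The step can be repaired within your framework: by joint continuity and compactness there is an open $W\supseteq K(\mathfrak{S})$ with $\mathfrak{S}W\mathfrak{S}\subseteq O$, and writing $\mu^{*N}=\mu^{*a}*\mu^{*m}*\mu^{*b}$ gives $\mu^{*N}(O)\geq\mu^{*m}(W)$ for every $1\leq m\leq N-2$, hence $\mu^{*N}(O)\geq\frac{1}{N-2}\sum_{m=1}^{N-2}\mu^{*m}(W)\to1$ by your Ces\`aro result, since a maximum dominates the average. Better still, the classical proof (essentially Rosenblatt's) needs none of stages (i)--(ii): pick $x_0\in K(\mathfrak{S})$ and an open $V\ni x_0$ with $\mathfrak{S}V\mathfrak{S}\subseteq O$; since $\mathrm{supp}(\mu)$ generates $\mathfrak{S}$, some $m$ has $\delta:=\mu^{*m}(V)>0$; viewing $\mu^{*n}$ as the law of a product of i.i.d.\ blocks of length $m$, the product lies in $O$ as soon as one block falls in $V$, whence $\mu^{*n}(O^c)\leq(1-\delta)^{\lfloor n/m\rfloor}$, which even gives a geometric rate.
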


Here $\mu^{*n}$ denotes the convolution product $\mu*\cdots*\mu$ of $n$ copies
of $\mu$. If $\mu^{*n}$ tends to $\mu$ as $n\to\infty$ then $\mu$ is
convolution idempotent, that is $\mu*\mu=\mu$. The kernel $K(\mathfrak{S})$ of
$\mathfrak{S}$ is the sub--semi--group of $\mathfrak{S}$ obtained by taking
the intersection of the family of two sided ideals of $\mathfrak{S}$, see
\cite[th. 1]{MR0329037}. A direct consequence of lemma \ref{le:rosenblatt} is
the absence of a translation invariant probability measure $\mu$ on
$\mathfrak{S}$ with full support such that the kernel of $\mathfrak{S}$ is a
$\mu$--proper sub--semi--group of $\mathfrak{S}$. By $\mu$--proper
sub--semi--group here we mean that its $\mu$-measure is $<1$. This result can
be easily understood intuitively since the translation associated to a
non invertible element of $\mathfrak{S}$ gives a strict contraction of the
support.

\begin{proof}[Proof of theorem \ref{th:negative}]
  The kernel of the semi--group $\cM_n$ is constituted by the $n\times n$
  Markov matrices with equal rows, which are the $n\times n$ idempotent Markov
  matrices (i.e.\ $\bM^2=\bM$). The reader may find more details in \cite[p.
  146]{MR0329037}. Since the kernel of $\cM_n$ is a $\cU(\cM_n)$--proper
  sub--semi--group of $\cM_n$, lemma \ref{le:rosenblatt} implies the absence
  of any convolution idempotent probability distribution on $\cM_n$,
  absolutely continuous with respect to $\cU(\cM_n)$ and with full support.
  The proof is finished by noticing that if a probability distribution on
  $\cM_n$ is invariant by every left (or right) translation, then it is
  convolution idempotent. Note by the way that the Wedderburn matrix
  $\frac{1}{n}\mathbf{1}$ belongs to the kernel of $\cM_n$, and also that this
  kernel is equal to $\{\lim_{k\to\infty}\bM^k;\bM\in\cA_n\}$ where $\cA_n$ is
  the collection of irreducible aperiodic elements of $\cM_n$. The reader may
  find in \cite[ch. 5]{MR0329037} the structure of non fully supported
  idempotent probability distributions on compact semi--groups and in
  particular on $\cM_n$.
\end{proof}

\section{Proofs of theorem  \ref{th:sym}}
\label{se:proofs}

The following theorem can be found for instance in \cite[th.
3.6]{bai-silverstein-book}.

\begin{thm}[Singular values of large dimensional non--centered random
  arrays]\label{th:MP}
  Let $(X_{i,j})_{1\leq i,j<\infty}$ be an infinite array of i.i.d.\ real
  random variables with mean $m$ and variance $\sigma^2\in(0,\infty)$. If
  $\bX=(X_{i,j})_{1\leq i,j\leq n}$, then
  $$
  \dP\PAR{ \frac{1}{n}\sum_{k=1}^n\delta_{s_k(n^{-1/2}\bX)}
    \overset{\text{w}}{\underset{n\to\infty}{\longrightarrow}} \cQ_\sigma }=1
  $$
  where $\overset{\text{w}}{\to}$ denotes the weak converge of probability
  distributions and $\cQ_\sigma$ is the Wigner quarter--circle distribution
  defined in table \ref{ta:distros}. Moreover,
  $$
  \dP\PAR{\lim_{n\to\infty}s_1(n^{-1/2}\,\bX)=2\sigma^2}=1
  \quad\text{if and only if}\quad
  \dE[\bX_{1,1}]=0\ \text{and}\ \dE[|\bX_{1,1}|^4]<\infty.
  $$
\end{thm}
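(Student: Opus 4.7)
The proof has two essentially independent parts, and the plan is to reduce each to classical results on centered i.i.d. square arrays by exploiting the rank--one nature of the mean contribution. Decompose $\bX = \bY + m\bJ$, where $\bY_{i,j} = \bX_{i,j} - m$ are i.i.d.\ centered with variance $\sigma^2$, and $\bJ=\mathbf{1}\mathbf{1}^\top$ is the $n\times n$ all-ones matrix, which has rank one. The square Marchenko--Pastur theorem for centered i.i.d.\ entries (recalled e.g.\ in \cite{bai-silverstein-book}) gives that the empirical distribution of the singular values of $n^{-1/2}\bY$ tends a.s.\ weakly to $\cQ_\sigma$. To transfer this to $\bX$, I would apply the classical rank inequality for singular-value empirical c.d.f.'s: if $F^{\bA}$ denotes the ESD of the singular values of a square matrix $\bA$, then
$$
\sup_{x\in\dR}\ABS{F^{n^{-1/2}\bX}(x) - F^{n^{-1/2}\bY}(x)} \leq \frac{1}{n}\,\mathrm{rank}(\bX - \bY) \leq \frac{1}{n},
$$
a consequence of Cauchy interlacing applied to the Hermitian dilation or to $(\bA\bA^*)^{1/2}$ after a rank-one update. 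This proves the first assertion.

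For the second assertion, the easy direction is the ``only if'' part. If $m \neq 0$, then $m\bJ$ has a single non-zero singular value equal to $\ABS{m}n$, so $s_1(n^{-1/2}m\bJ)=\ABS{m}\sqrt{n}$, and Weyl's inequality $s_1(\bX) \geq s_1(m\bJ)-s_1(\bY)$, combined with the a.s.\ boundedness of $n^{-1/2}s_1(\bY)$ already implied by part (1) (more precisely by the first-moment method applied to $\bY\bY^\top$), forces $n^{-1/2}s_1(\bX)\to\infty$. If instead $m=0$ but $\dE[\ABS{\bX_{1,1}}^4] = \infty$, one uses the elementary lower bound $s_1(\bX) \geq \max_{1\leq i,j\leq n}\ABS{\bX_{i,j}}$ (look at the Euclidean norm of a single row/column) together with the Borel--Cantelli equivalence
$$
\dE[\ABS{\bX_{1,1}}^4] < \infty \ \Longleftrightarrow\ \sum_{n\geq 1}\dP(\ABS{\bX_{1,1}} \geq \sqrt{n}) < \infty,
$$
applied to the array of $n^2$ i.i.d.\ copies: divergence of this series forces $\limsup_n n^{-1/2}\max_{i,j\leq n}\ABS{\bX_{i,j}} = +\infty$ a.s., so $n^{-1/2}s_1(\bX)$ cannot converge to $2\sigma$.

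The hard direction is the ``if'' part, namely the Bai--Yin spectral norm theorem for centered i.i.d.\ square arrays with finite fourth moment. I would proceed by truncation at level $\sqrt{n}/(\log n)^\alpha$, recenter, and then apply the moment method to $\mathrm{tr}((\bY\bY^\top)^k)$ with $k=k_n\to\infty$ slowly (say $k_n\sim\log n$), using the standard combinatorial enumeration of closed walks on bipartite graphs to show
$$
\dE\,\mathrm{tr}\PAR{(n^{-1}\bY\bY^\top)^{k_n}} \leq (2\sigma)^{2k_n}(1+o(1)).
$$
Combined with a Borel--Cantelli comparison between truncated and original matrices, this yields $n^{-1/2}s_1(\bY) \leq 2\sigma + o(1)$ a.s.; the matching lower bound $n^{-1/2}s_1(\bY)\geq 2\sigma - o(1)$ follows from part (1) since $\cQ_\sigma$ places positive mass near $2\sigma$. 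The main obstacle is precisely the sharp combinatorial control of the trace expansion that makes the fourth moment hypothesis appear on the nose, which is the content of the classical Yin--Bai--Krishnaiah argument I would follow; a simpler sub-Gaussian style argument is not available under the bare $L^4$ assumption.
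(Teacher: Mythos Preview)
The paper does not actually prove this theorem: it is quoted as a known result, with a pointer to \cite[th.~3.6]{bai-silverstein-book}. Your sketch follows the standard route one finds there --- rank--one removal of the mean plus the rank inequality for the ESD, Borel--Cantelli via $\max_{i,j}|\bX_{i,j}|$ for the necessity of the fourth moment, and the Yin--Bai--Krishnaiah truncation/trace--moment argument for the Bai--Yin upper bound --- so in spirit there is nothing to compare.

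There is, however, one genuine gap in your ``only if'' branch when $m\neq 0$. You write that Weyl's inequality together with ``the a.s.\ boundedness of $n^{-1/2}s_1(\bY)$ already implied by part (1)'' forces $n^{-1/2}s_1(\bX)\to\infty$. But part (1) is only weak convergence of the ESD and says nothing about the top singular value, and the crude Frobenius bound $s_1(\bY)^2\leq\mathrm{tr}(\bY\bY^\top)\sim n^2\sigma^2$ only gives $s_1(n^{-1/2}\bY)=O(\sqrt{n})$, which is the \emph{same} order as $s_1(n^{-1/2}m\bJ)=|m|\sqrt{n}$; Weyl then yields nothing when $|m|\leq\sigma$. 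The clean fix is to bypass $\bY$ altogether: with the unit vector $u=n^{-1/2}(1,\ldots,1)^\top$ one has
\[
s_1(n^{-1/2}\bX)\ \geq\ n^{-1/2}\,\bigl|u^\top\bX u\bigr|\ =\ n^{-3/2}\Bigl|\sum_{i,j=1}^n \bX_{i,j}\Bigr|\ =\ \sqrt{n}\,|m|\,(1+o(1))\ \longrightarrow\ \infty
\]
almost surely by the strong law of large numbers applied to the $n^2$ i.i.d.\ entries.
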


% Theorem \ref{th:MP} implies in particular that almost surely
% $$
% \frac{1}{n}\,\mathrm{Card}\{1\leq k\leq n\text{ such that }
% \lambda_k(n^{-1}\,\bX\bX^\top)>4\sigma^2\} \underset{n\to\infty}{\longrightarrow}0.
% $$
% However, it can be shown (see \cite{bai-silverstein-book}) that if $m\neq0$,
% then almost surely,
% $$
% \lambda_1(n^{-1}\,\bX\bX^\top)\underset{n\to\infty}{\longrightarrow}+\infty.
% $$

The following lemma is a consequence of \cite[le. 2]{MR1235416} (see also
\cite[le. 5.13]{bai-silverstein-book}).

\begin{lem}[Uniform law of large numbers]\label{le:LLN}
  If $(X_{i,j})_{1\leq i,j<\infty}$ is an infinite array of i.i.d.\ random
  variables of mean $m$, then by denoting $S_{i,n}=\sum_{j=1}^n X_{i,j}$,
  $$  
  \max_{1\leq i\leq n}\ABS{\frac{S_{i,n}}{n}-m}
  \overset{\text{a.s.}}{\underset{n\to\infty}{\longrightarrow}}0
  $$
  and in the case where $m\neq0$, we have also
  $$
  \max_{1\leq i\leq n}\ABS{\frac{n}{S_{i,n}}-\frac{1}{m}}
  \overset{\text{a.s.}}{\underset{n\to\infty}{\longrightarrow}}0.
  $$  
\end{lem}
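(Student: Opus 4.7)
The plan is to reduce the uniform control to a single-row tail estimate via the union bound, exploiting the fact that the rows of the array are i.i.d. For any $\varepsilon>0$, independence gives
\begin{equation*}
\dP\PAR{\max_{1\leq i\leq n}\ABS{\frac{S_{i,n}}{n}-m}>\varepsilon}
\leq n\,\dP\PAR{\ABS{\frac{S_{1,n}}{n}-m}>\varepsilon}.
\end{equation*}
Thus it suffices to establish that $n\,\dP(|S_{1,n}/n-m|>\varepsilon)$ is summable in $n$, because then a Borel--Cantelli argument forces $\max_{i\leq n}|S_{i,n}/n-m|\leq\varepsilon$ for all $n$ large enough almost surely, and letting $\varepsilon\to 0$ along a countable sequence concludes the first claim.

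For the one-row tail bound, I would use a truncation-and-concentration step. Write $X_{i,j}=Y_{i,j}^{(n)}+Z_{i,j}^{(n)}$ where $Y_{i,j}^{(n)}:=X_{i,j}\,\rI_{\{\ABS{X_{i,j}}\leq n^{\alpha}\}}$ for some $\alpha\in(0,1)$. The deleted part contributes the event $\{\max_{i,j\leq n}\ABS{X_{i,j}}>n^{\alpha}\}$, whose probability is bounded by $n^2\,\dP(\ABS{X_{1,1}}>n^{\alpha})$ and is summable provided $X_{1,1}$ has enough moments, which is harmless in our application since $X_{1,1}$ is exponential (i.e.\ all moments exist and one may even use Chernoff directly). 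For the truncated part, Bienaymé--Chebyshev applied to the centered sum $\sum_j(Y_{i,j}^{(n)}-\dE Y_{i,j}^{(n)})$ gives a tail of order $\text{Var}(Y^{(n)})/(n\varepsilon^2)$, and choosing $\alpha$ small (or using the exponential Chernoff bound, which is cleaner here) yields a geometric-type decay, hence $n\,\dP(\cdot)$ is summable.

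For the second statement, I would derive it from the first as a soft consequence. Assuming $m\neq 0$, say $m>0$, the first part says that almost surely, for every $\varepsilon\in(0,m/2)$ there exists a random $n_0$ such that $\min_{i\leq n}S_{i,n}/n\geq m/2$ and $\max_{i\leq n}\ABS{S_{i,n}/n-m}\leq\varepsilon$ for every $n\geq n_0$. The identity
\begin{equation*}
\frac{n}{S_{i,n}}-\frac{1}{m}
=\frac{m-S_{i,n}/n}{m\cdot (S_{i,n}/n)}
\end{equation*}
then yields the uniform bound $\max_i\ABS{n/S_{i,n}-1/m}\leq 2\varepsilon/m^2$ for $n\geq n_0$, which gives the desired a.s.\ convergence.

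The only real obstacle is the summability $\sum_n n\,\dP(\ABS{S_{1,n}/n-m}>\varepsilon)<\infty$: under just $L^1$ this requires the delicate Bai--Yin moment reduction invoked in \cite{MR1235416}, but in the setting of theorem \ref{th:sym} the entries $X_{i,j}$ are exponential, so the exponential Chernoff bound $\dP(\ABS{S_{1,n}/n-m}>\varepsilon)\leq 2e^{-cn}$ makes the whole estimate trivial and removes any difficulty.
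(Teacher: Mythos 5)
Your proposal is essentially sound for the situation in which the lemma is actually used, and it is more self-contained than the paper on that point: the paper gives no proof at all, quoting the statement as a consequence of \cite[le.~2]{MR1235416} (see also \cite[le.~5.13]{bai-silverstein-book}). Your reduction by the union bound over the $n$ (identically distributed) rows, the requirement $\sum_n n\,\dP\PAR{\ABS{S_{1,n}/n-m}>\varepsilon}<\infty$, Borel--Cantelli, and a countable sequence of $\varepsilon$'s is the natural elementary route, and with exponential entries the Chernoff bound $\dP\PAR{\ABS{S_{1,n}/n-m}>\varepsilon}\leq 2e^{-c(\varepsilon)n}$ indeed makes everything immediate; your deduction of the second display from the first via $\frac{n}{S_{i,n}}-\frac{1}{m}=\frac{m-S_{i,n}/n}{m\,(S_{i,n}/n)}$ together with the eventual uniform lower bound on $\min_{i\leq n}S_{i,n}/n$ is also correct. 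So, for the entries of theorem \ref{th:sym}, your argument replaces the paper's citation by a short direct proof.

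Two points need correction, however. First, inside your truncation step, Bienaym\'e--Chebyshev on the truncated centered row sum only gives a tail of order $1/n$, and $n\cdot O(1/n)$ is not summable, whatever $\alpha$ you choose; to run the truncation route you must use an exponential inequality for bounded summands (Hoeffding or Bernstein, giving a tail of order $\exp(-c\varepsilon^2 n^{1-2\alpha})$ for truncation level $n^\alpha$ with $\alpha<1/2$), or Chernoff directly as you do in the exponential case. Second, your closing assessment of the general case is off: under a bare first moment the statement of the lemma is not ``delicate'', it is false, and \cite{MR1235416} does not prove it at that level of generality --- their lemma is an equivalence in which a finite second moment is necessary. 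Concretely, take nonnegative i.i.d.\ entries with $\dP(X_{1,1}>t)\sim t^{-3/2}$: the mean is finite, the second moment is not, and since $\sum_{i,j}\dP\PAR{X_{i,j}>(m+1)\max(i,j)}=\infty$, the second Borel--Cantelli lemma produces, for infinitely many $n$, an entry of the $n\times n$ corner exceeding $(m+1)n$, hence $\max_{i\leq n}S_{i,n}/n>m+1$ infinitely often almost surely. Moreover your union-bound strategy is intrinsically lossy: by the Baum--Katz theorem, $\sum_n n\,\dP\PAR{\ABS{S_{1,n}/n-m}>\varepsilon}<\infty$ holds if and only if $\dE\ABS{X_{1,1}}^3<\infty$, whereas the blocking argument of Bai--Yin reaches the sharp finite-variance hypothesis. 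None of this affects the application, where the entries are exponential, but a correct standalone statement of the lemma should assume $\dE[X_{1,1}^2]<\infty$ rather than only a finite mean.
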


The following lemma is a consequence of the Courant--Fischer variational
formulas for singular values, see \cite{MR1084815}. Also, we leave the proof
to the reader.

\begin{lem}[Singular values of diagonal multiplicative perturbations]
  \label{le:sidia}
  For every $n\times n$ matrix $\bA$, every $n\times n$ diagonal matrix $\bD$,
  and every $1\leq k\leq n$,
  $$
  s_n(\bD) s_k(\bA)\leq s_k(\bD\bA)\leq s_1(\bD) s_k(\bA).
  $$
\end{lem}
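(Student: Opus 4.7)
The plan is to exploit the Courant--Fischer variational characterization of singular values, which for any $n\times n$ matrix $\bM$ reads
$$
s_k(\bM)=\min_{\SSK{V\subset\dR^n\\\dim V=n-k+1}}\max_{\SSK{x\in V\\\NRM{x}_2=1}}\NRM{\bM x}_2
=\max_{\SSK{V\subset\dR^n\\\dim V=k}}\min_{\SSK{x\in V\\\NRM{x}_2=1}}\NRM{\bM x}_2.
$$
Since $\bD$ is diagonal with diagonal entries $d_1,\ldots,d_n$, we have $s_1(\bD)=\max_i\ABS{d_i}$ and $s_n(\bD)=\min_i\ABS{d_i}$, and in particular the elementary two--sided bound
$$
s_n(\bD)\NRM{y}_2\leq\NRM{\bD y}_2\leq s_1(\bD)\NRM{y}_2
\quad\text{for every }y\in\dR^n
$$
follows directly from $\NRM{\bD y}_2^2=\sum_i\ABS{d_i}^2\ABS{y_i}^2$.

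For the upper bound, I would apply the first (min--max) formula to $\bD\bA$, use the pointwise inequality $\NRM{\bD\bA x}_2\leq s_1(\bD)\NRM{\bA x}_2$ valid for every $x$, and pass to the supremum over $V\cap\{\NRM{x}_2=1\}$ and then the infimum over $(n-k+1)$--dimensional subspaces $V$, to obtain
$$
s_k(\bD\bA)\leq s_1(\bD)\min_{\dim V=n-k+1}\max_{x\in V,\NRM{x}_2=1}\NRM{\bA x}_2=s_1(\bD)\,s_k(\bA).
$$
For the lower bound, I would symmetrically apply the second (max--min) formula, use $\NRM{\bD\bA x}_2\geq s_n(\bD)\NRM{\bA x}_2$, and take the infimum over unit $x\in V$ followed by the supremum over $k$--dimensional subspaces $V$.

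There is no genuine obstacle: the argument amounts to interchanging a scalar multiplicative factor with a min or max, and the diagonal hypothesis on $\bD$ is only used to make the pointwise bounds on $\NRM{\bD y}_2$ transparent (they in fact hold for arbitrary $\bD$). An equivalent route, worth mentioning as a sanity check, is to derive the lower bound from the upper bound: when $s_n(\bD)>0$ the matrix $\bD$ is invertible, $s_1(\bD^{-1})=1/s_n(\bD)$, and writing $\bA=\bD^{-1}(\bD\bA)$ together with the already proven upper bound yields $s_k(\bA)\leq s_n(\bD)^{-1}s_k(\bD\bA)$, while the case $s_n(\bD)=0$ is trivial.
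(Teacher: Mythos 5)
Your proof is correct, and it follows exactly the route the paper indicates: the lemma is stated there with the remark that it is a consequence of the Courant--Fischer variational formulas and the proof is left to the reader, which is precisely the min--max/max--min argument you carry out (your observation that the pointwise bounds $s_n(\bD)\NRM{y}_2\leq\NRM{\bD y}_2\leq s_1(\bD)\NRM{y}_2$ hold for arbitrary $\bD$, so the diagonal hypothesis is not needed, is also accurate).
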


We are now able to prove theorem \ref{th:sym}.

\begin{proof}[Proof of theorem \ref{th:sym}]
  We have $\bM = \bD \bE$ where $\bE=(X_{i,j})_{1\leq i,j\leq n}$ and $\bD$ is
  the $n\times n$ diagonal matrix given for every $1\leq i\leq n$ by
  $$
  \bD_{i,i}=\frac{1}{\sum_{j=1}^n X_{i,j}}.
  $$
  The fact that $\bM\sim\cU(\cM_n)$ follows immediately from theorem
  \ref{th:DME} combined with the construction of the Dirichlet distribution
  $\cD_n(1,\ldots,1)$ from i.i.d.\ exponential random variables. It remains to
  prove the convergence of the ESD of $\sqrt{n\bM\bM^\top}$ as $n\to\infty$ to
  the Wigner quarter--circle distribution $\cQ_1$. For such, we use the method
  of Aubrun \cite{MR2280648}, by replacing the unit $\NRM{\cdot}_1$--ball by
  the portion of the unit $\NRM{\cdot}_1$--sphere with non--negative
  coordinates. If suffices to show that almost surely, the discrete measure
  $\frac{1}{n}\sum_{k=2}^n\delta_{s_k(\sqrt{n}\,\bM)}$ tends weakly to the
  Wigner quarter--circle distribution $\cQ_1$.

  We first observe that $\bE$ is a rank one additive perturbation of the
  centered random matrix $\bE-\dE\bE$. Also, a standard interlacing inequality
  gives
  $$
  s_2(\bE) \leq s_1(\bE-\dE\bM).
  $$
  Now by the second part of theorem \ref{th:MP} we have
  $s_1(\bE-\dE\bE)=O(\sqrt{n})$ almost surely. Consequently,
  $s_2(n^{-1/2}\,\bE)=O(1)$ almost surely. In particular, almost surely, the
  sequence $(\frac{1}{n}\sum_{k=2}^n\delta_{s_k(n^{-1/2}\,\bE)})_{n\geq1}$
  remains in a compact set. The desired result follows then from the
  combination of the first part of theorem \ref{th:MP} with lemmas
  \ref{le:sidia} and \ref{le:LLN}. This proof does not rely on the exponential
  nature of the $X_{i,j}$'s and remains actually valid for more general laws,
  see \cite{chafai-cir}.
\end{proof}

There is no equivalent of lemma \ref{le:sidia} for the eigenvalues instead of
the singular values, and thus the method used to prove theorem \ref{th:sym}
fails for conjecture \ref{cj:cirlaw}. Note that by lemma \ref{le:LLN} used
with the exponential distribution of mean $m=1$,
$$
\NRM{n\bD - \bI}_{2\to2} %
= \max_{1\leq i\leq n}\ABS{\frac{n}{\sum_{j=1}^n X_{i,j}}-1} %
\underset{n\to\infty}{\longrightarrow} 0 \quad\text{a.s.}
$$
%where $\NRM{\bA}_{2\to2}=s_1(\bA)=\max_{\NRM{x}_2=1}\NRM{\bA x}_{2\to2}$ is the Euclidean
%operator norm of $\bA$. 
If $\bA$ is diagonal, then we simply have
$\NRM{\bA}_{2\to2}=s_1(\bA)=\max_{1\leq k\leq n}\ABS{\bA_{k,k}}$, and when $\bA$ is
diagonal and invertible, $\NRM{\bA^{-1}}_{2\to2}^{-1}=s_n(\bA)=\min_{1\leq k\leq
  n}\ABS{\bA_{k,k}}$. Now, by the circular law theorem for non--central random
matrices \cite{djalil-nccl}, we get that almost surely, the ESD of
$n^{-1/2}\,\bE$ converges, as $n\to\infty$, to the uniform distribution
$\cC_1$ (see table \ref{ta:distros}). It is then natural to decompose
$\sqrt{n}\,\bM$ as
$$
\sqrt{n}\,\bM %
= n\bD n^{-1/2}\,\bE %
= (n\bD-\bI) n^{-1/2}\,\bE + n^{-1/2}\,\bE.
$$
Unfortunately, since $m=1\neq 0$, we have almost surely (see
\cite{djalil-nccl})
$$
\NRM{n^{-1/2}\,\bE}_{2\to2}=s_1(n^{-1/2}\,\bE)
\underset{n\to\infty}{\longrightarrow}+\infty.
$$
This suggests that $\sqrt{n}\,\bM$ cannot be seen as a perturbation of
$n^{-1/2}\,\bE$ with a matrix of small norm. Actually, even if it was the case,
the relation between the two spectra is unknown since $\bE$ is not normal. One
can think about using logarithmic potentials to circumvent the problem. The
strength of the logarithmic potential approach is that it allows to study the
asymptotic behavior of the ESD (i.e.\ eigenvalues) of non--normal matrices via
the singular values of a family of matrices indexed by $z\in\dC$. The details
are given in \cite{djalil-nccl} for instance. The logarithmic potential of the
ESD of $\sqrt{n}\,\bM$ at point $z$ is
\begin{align*}
  U_n(z)
  &=-\frac{1}{n}\log\ABS{\det(\sqrt{n}\,\bM-z\bI)} \\
  &=-\frac{1}{n}\log\ABS{\det(n\bD)}
  -\frac{1}{n}\log\ABS{\det(n^{-1/2}\,\bE-z(n\bD)^{-1})}.
\end{align*}
Now, by lemma \ref{le:LLN},
$$
\frac{1}{n}\log\ABS{\det(n\bD)}\underset{n\to\infty}{\longrightarrow}0%
\quad\text{a.s.}
$$
By the circular law theorem for non--central random matrices
\cite{djalil-nccl} and the lower envelope theorem \cite{MR1485778}, almost
surely, for quasi-every\footnote{This means ``except on a subset of zero
  capacity'', in the sense of potential theory, see \cite{MR1485778}.}
$z\in\dC$, the quantity
$$
\liminf_{n\to\infty} -\frac{1}{n}\log\ABS{\det(n^{-1/2}\,\bE-z\bI)}
$$
is equal to the logarithmic potential at point $z$ of the uniform distribution
$\cC_1$ on the unit disc $\{z\in\dC;\ABS{z}\leq1\}$. It is thus enough to show
that almost surely, for every $z\in\dC$,
$$
\frac{1}{n}\log\ABS{\det(n^{-1/2}\,\bE-z(n\bD)^{-1})} -
\frac{1}{n}\log\ABS{\det(n^{-1/2}\,\bE-z\bI)} %
\underset{n\to\infty}{\longrightarrow}0. 
$$
Unfortunately, we ignore how to prove that. A possible alternative beyond
potential theoretic tools is to adapt the method developed in
\cite{tao-vu-krishnapur-circular} by Tao and Vu involving a ``replacement
principle''. The reader may find some progresses in \cite{chafai-cir}.

\medskip

{\footnotesize\textbf{Acknowledgements.} Part of this work was done during two
  visits to \textsc{Laboratoire Jean Dieudonn\'e} in Nice, France. The author
  would like to thank Pierre \textsc{Del Moral} and Persi \textsc{Diaconis}
  for their kind hospitality there. Many thanks to Zhidong \textsc{Bai},
  Franck \textsc{Barthe}, W{\l}od\-zi\-mierz \textsc{Bryc}, Mireille
  \textsc{Capitaine}, Delphine \textsc{F\'eral}, Michel \textsc{Ledoux}, and
  G\'erard \textsc{Letac} for exchanging some ideas on the subject. This work
  benefited from many stimulating discussions with Neil \textsc{O'Connell}
  when he visited the \textsc{Institut de Math\'ematiques de Toulouse}. }

{%
\footnotesize%
\bibliographystyle{amsalpha}%
\bibliography{dme}%
}

\vfill

{\footnotesize %
\noindent Djalil~\textsc{Chafa\"\i} \\ %
\noindent \textsc{Laboratoire d'Analyse et de Math\'ematiques Appliqu\'ees (UMR CNRS 8050)}\\
\textsc{Universit\'e Paris-Est Marne-la-Vall\'ee}\\
\textsc{5 boulevard Descartes, Champs-sur-Marne, F-77454, Cedex 2, France.} \\
\textbf{E-mail:} \texttt{djalil(at)chafai.net} \quad % 
\textbf{Web:} \texttt{http://djalil.chafai.net/} 

\begin{center}
  \begin{figure}[p]
    \begin{center}
      \includegraphics[scale=0.8]{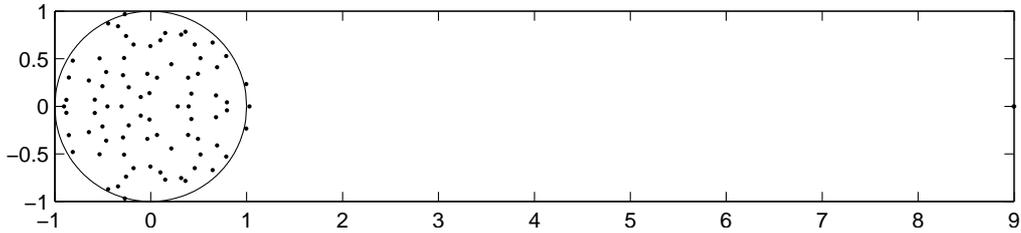}
      \caption{Plot of the spectrum of a single realization of $\sqrt{n}\,\bM$
        where $\bM\sim\cU(\cM_n)$ with $n=81$. We see one isolated
        eigenvalue $\lambda_1(\sqrt{n}\,\bM)=\sqrt{n}=9$ while the rest of the
        spectrum remains near the unit disc and seems uniformly distributed,
        in accordance with conjecture \ref{cj:cirlaw}.}
      \label{fi:cirlaw-DME}
    \end{center}
  \end{figure}
\end{center}

\begin{center}
  \begin{figure}[p]
    \begin{center}
      \includegraphics[scale=0.8]{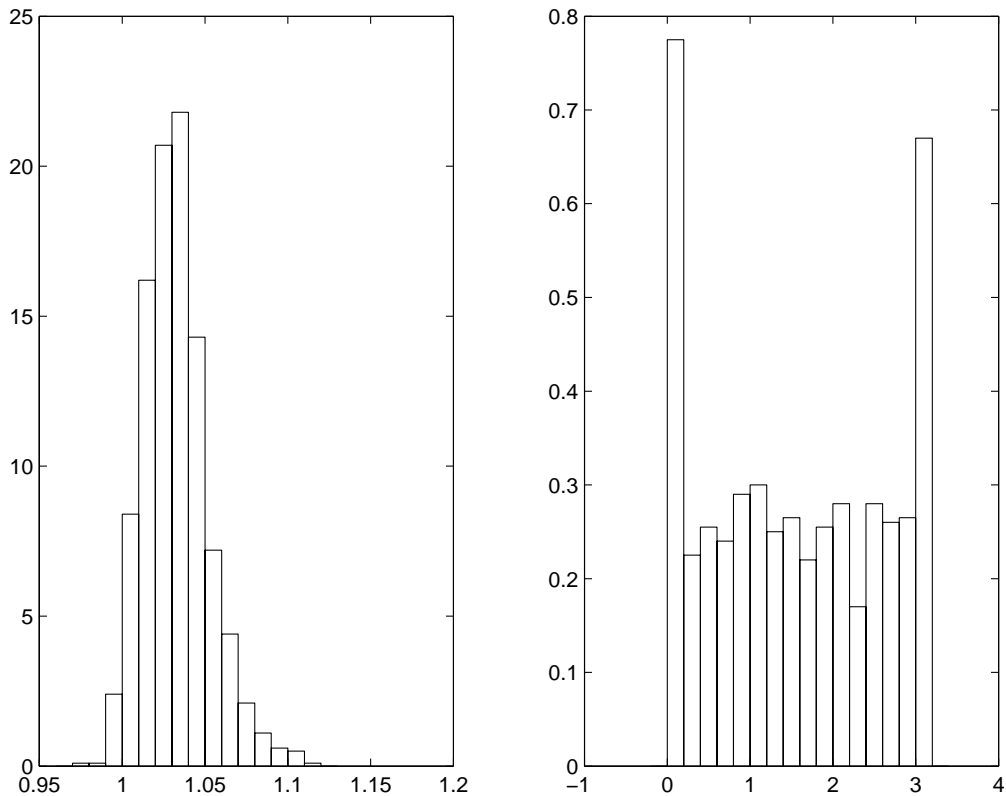}
      \caption{Here $1000$ i.i.d.\ realizations of $\sqrt{n}\,\bM$ where
        simulated where $\bM\sim\cU(\cM_n)$ with $n=300$. The first plot is
        the histogram of $\ABS{\lambda_2(\sqrt{n}\,\bM)}$, i.e.\ the module of
        the sub--dominant eigenvalue $\lambda_2(\sqrt{n}\,\bM)$. The second
        plot is the histogram of
        $\ABS{\mathrm{Phase}(\lambda_2(\sqrt{n}\,\bM))}$. Recall that the
        spectrum is symmetric with respect to the real axis since the matrices
        are real. }
      \label{fi:subdom1}  
    \end{center}
  \end{figure}
\end{center}

\begin{center}
  \begin{figure}[p]
    \begin{center}
      \includegraphics[scale=0.8]{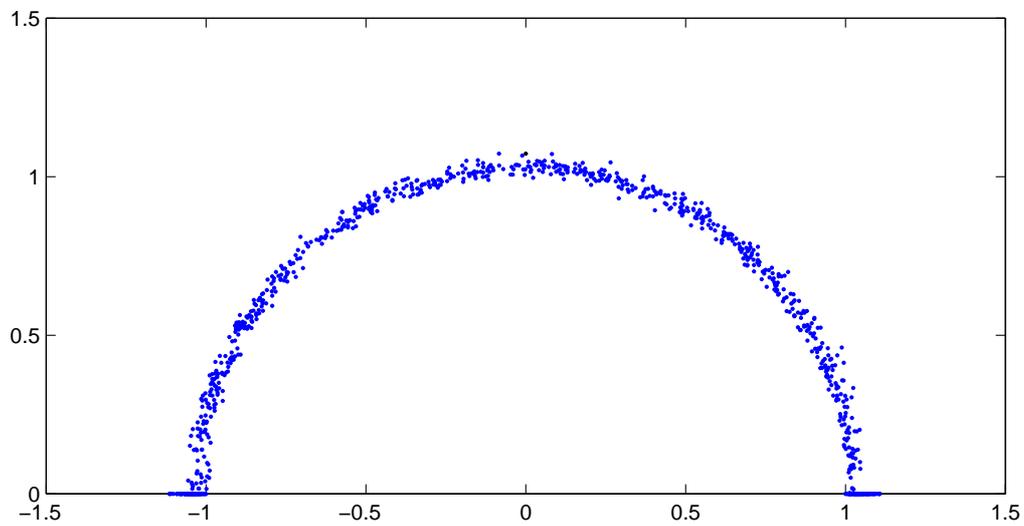}
      \caption{Here we reused the sample used for figure \ref{fi:subdom1}. The
        graphic is a plot of the $1000$ i.i.d.\ realizations of the
        sub--dominant eigenvalue $\lambda_2(\sqrt{n}\,\bM)$. Since we deal with
        real matrices, the spectrum is symmetric with respect to the real
        axis, and we plotted
        $(\mathrm{RealPart}(\lambda_2),|\mathrm{ImaginaryPart}(\lambda_2)|)$ in
        the complex plane.}
      \label{fi:subdom2}  
    \end{center}
  \end{figure}
\end{center}

\end{document}